\newcommand*{\QEDA}{\hfill\hbox{\vrule width1.0ex height1.0ex}}
\newtheorem{thm}{Theorem}[section]
\newtheorem{theorem}[thm]{Theorem}
\newtheorem{lemma}[thm]{Lemma}
\newtheorem{proposition}[thm]{Proposition}
\newcommand{\beq}{\begin{equation}}
\newcommand{\eeq}{\end{equation}}
\newcommand{\beqa}{\begin{eqnarray}}
\newcommand{\eeqa}{\end{eqnarray}}
\newcommand{\beqas}{\begin{eqnarray*}}
\newcommand{\eeqas}{\end{eqnarray*}}
\newcommand{\bi}{\begin{itemize}}
\newcommand{\ei}{\end{itemize}}
\newcommand{\vgap}{\vspace{.1in}}
\newcommand{\lessgap}{\vspace{-.1in}}
\newcommand{\nn}{\nonumber}
\newcommand{\R}{\mathbb{R}}
\newcommand{\E}{\mathbb{E}}
\newcommand{\Prob}{\mathbb{P}}
\newcommand{\cL}{{\cal L}}
\newcommand{\lam}{{\lambda}}
\newcommand{\inner}[2]{\langle #1,#2\rangle}
\newcommand{\argmin}{\mathrm{argmin}\,}
\newcommand{\dom}{\mathrm{dom}\,}
\begin{document}
	\title{Variance Reduction and Low Sample Complexity in\\ Stochastic Optimization via Proximal Point Method}
	\date{February 14, 2024 (1st revision: October 3, 2025; 2nd revision: December 21, 2025)}
	
	\author{
		Jiaming Liang \thanks{Goergen Institute for Data Science and Artificial Intelligence (GIDS-AI) and Department of Computer Science, University of Rochester, Rochester, NY 14620 (email: {\tt jiaming.liang@rochester.edu}). This work was partially supported by GIDS-AI seed funding and AFOSR grant FA9550-25-1-0182.
		}
	}
	\maketitle
	\maketitle
	
	\begin{abstract}
		
		High-probability guarantees in stochastic optimization are often obtained only under strong noise assumptions such as sub-Gaussian tails. 
		We show that such guarantees can also be achieved under the weaker assumption of bounded variance by developing a stochastic proximal point method. 
		This method combines a proximal subproblem solver, which inherently reduces variance, with a probability booster that amplifies per-iteration reliability into high-confidence results. 
		The analysis demonstrates convergence with low sample complexity, without restrictive noise assumptions or reliance on mini-batching.\\

		{\bf Key words.} stochastic optimization, high probability, sample complexity, iteration complexity, proximal point method, variance reduction
		\\
		
		{\bf AMS subject classifications.} 
		49M37, 65K05, 68Q25, 90C25, 90C30, 90C60
	\end{abstract}
	
	\section{Introduction}
	
	A key challenge in stochastic optimization is obtaining high-confidence guarantees, which are crucial in practice but typically require restrictive assumptions such as sub-Gaussian noise.  
	In this paper, we address this challenge by introducing a stochastic proximal point method (SPPM) for solving the stochastic convex composite optimization problem
	\begin{equation}\label{eq:ProbIntro}
		\phi_{*}:=\underset{x \in \R^d}\min \left\{\phi(x):=f(x)+h(x)\right\},
	\end{equation}
	where $f(x)=\mathbb{E}_{\xi}[F(x,\xi)]$ is the expectation of a stochastic function $F(x,\xi)$. 
	We analyze the sample complexity of SPPM under the following assumptions: $h$ is closed, convex, and admits an efficiently computable proximal mapping; $f$ is closed and strongly convex; and the stochastic gradient oracle of $f$ is unbiased with bounded variance.

	Under the above assumptions, standard results on stochastic approximation (SA) methods \cite{nemirovskij1983problem,polyak1992acceleration,ghadimi2013optimal,liang2023single} provide non-asymptotic convergence guarantees in expectation. Specifically, for some tolerance $\varepsilon>0$, one can obtain an $\varepsilon$-solution $x\in \dom h$ satisfying $\E[\phi(x)]-\phi_* \le \varepsilon$ within ${\cal O}(1/\varepsilon)$ queries to the stochastic gradient oracle of $f$. High-probability guarantees of the form $\Prob(\phi(x)-\phi_* \le \varepsilon) \ge 1-p$ for given $\varepsilon>0$ and $p\in (0,1)$ can also be derived using Markov's inequality. However, this yields a sample complexity of ${\cal O}(1/(\varepsilon p))$, which can be unsatisfactory when $p$ is small.
	Without modifying the algorithm, prior works \cite{nemjudlannem09,juditsky2014deterministic,lan2012optimal,ghadimi2012optimal,ghadimi2013optimal} improve the dependence of $p$ from ${\cal O}(1/p)$ to ${\cal O}(\log(1/p))$ by assuming a stronger condition on the stochastic gradient noise, namely a sub-Gaussian distribution. This assumption, however, is more restrictive than the bounded variance condition considered in this paper. 
	
	Recent papers \cite{nazin2019algorithms,hsu2016loss,davis2021low} employ gradient clipping or classical probability tools, such as robust distance estimation (RDE) by \cite{nemirovskij1983problem}, to reduce the dependence of sample complexity on $p$ without imposing restrictive assumptions on stochastic gradient noise.
	In particular, \cite{davis2021low} proposes an interesting framework that incorporates an arbitrary SA method and RDE into the proximal point method (PPM). For strongly convex and smooth problems, the condition number of the proximal subproblem improves with smaller prox stepsizes, leading to sample complexity of with only logarithmic dependence on $1/p$.
	While conceptually appealing, the approach in \cite{davis2021low} raises several practical questions. First, the method is described in terms of a generic minimization oracle within PPM, leaving open which specific SA method might work best in practice. Second, the algorithm relies on a sequence of decreasing prox stepsizes that depend on the strong convexity parameter $\mu$, which is often unknown or difficult to estimate. Designing adaptive or universal methods that achieve optimal iteration complexity without prior knowledge of $\mu$ remains an active line of research, even in deterministic settings \cite{aujol2023parameter,nesterov2013gradient,lan2023optimal}. Finally, although their analysis also improves the dependence of sample complexity on the condition number compared with \cite{hsu2016loss}, this improvement similarly relies on access to $\mu$. Thus, while the framework in \cite{davis2021low} offers valuable theoretical insights, its practical applicability may depend on further advances in adaptive methods.
	
	\paragraph{SPPM in a nutshell.}
	Our algorithm, SPPM, builds on PPM with a \emph{constant} prox stepsize $\lambda>0$. 
	At iteration $k$, given the current prox–center $\bar z_{k-1}$, we consider the proximal subproblem
	\begin{equation}\label{eq:prox}
		\hat z_k:=\underset{x\in \R^d} \argmin \left\{\phi(x) + \frac{1}{2\lam}\|x-\bar z_{k-1}\|^2\right\}.
	\end{equation}
	SPPM does not require solving \eqref{eq:prox} exactly. Instead, it queries a \emph{proximal subproblem solver} (PSS) $n$ times to obtain statistically independent approximate solutions, and then applies a \emph{probability booster} (PB) that selects one candidate. 
	The booster is designed so that, with high probability, the selected point lies within a prescribed accuracy of the exact proximal point $\hat z_k$. 
	This selected point then becomes the next prox–center, i.e., $\bar z_k\gets \text{PB}(\{\text{PSS outputs}\})$, and the process repeats.
	
	Our analysis establishes high-probability guarantees for a PPM scheme with a \emph{fixed} $\lambda$. 
	The main difficulty is that the stochastic errors incurred in solving each proximal subproblem do not naturally diminish over time; instead, they can accumulate across iterations. 
	This requires a careful argument to show that the error remains controlled and does not overwhelm the contraction effect of the proximal operator. 
	By contrast, the high-confidence framework of \cite{davis2021low} (proxBoost) relies on a \emph{geometrically decaying} prox stepsize rule, which progressively suppresses noise across stages and simplifies the concentration analysis. 
	However, geometric decay has drawbacks: (i) it introduces stage lengths and decay factors that must be scheduled or tuned in advance, often depending on the problem horizon; and (ii) once the stepsize becomes very small, progress can stagnate due to overly conservative steps. 
	Our results demonstrate that one can obtain high-probability convergence \emph{without} resorting to geometric decaying stepsizes, by combining a constant-$\lambda$ PPM with a probability booster that carefully controls the stochastic error at each iteration.
	
	\paragraph{Contributions.}
	Our contributions build on two crucial subroutines, namely PSS and PB.  
	PSS returns inexact proximal points whose distribution has \emph{reduced variance} relative to raw stochastic gradient steps, which reflects an intrinsic variance-reduction effect of proximal regularization. 
	PB further amplifies this benefit by aggregating $n$ independent PSS calls and selecting a statistically reliable candidate, yielding a point that is sufficiently close to $\hat z_k$ with probability at least $1-p$ while requiring only $O(\log(1/p))$ extra samples. 
	Combining these ideas, we propose SPPM, a stochastic proximal point scheme with a constant prox stepsize $\lam$ (with a lower bound), together with the implementable PSS and PB oracles. 
	Under only a bounded-variance noise assumption, we establish a high-probability convergence guarantee with failure probability at most $p$ and a sample complexity that scales $O(\log(1/p))$; see the main results of the paper, namely Theorem~\ref{thm:main} (high-probability guarantee) and Theorem~\ref{thm:sample} (low sample complexity). 
	
	\paragraph{Organization of the paper.}
	Subsection~\ref{subsec:DefNot} introduces the basic notation and definitions used throughout the paper. 
	Section~\ref{sec:SPPM} formally states the optimization problem \eqref{eq:ProbIntro} and presents SPPM. 
	Section~\ref{sec:PSS} describes the PSS oracle and analyzes its properties. 
	Section~\ref{sec:PPM} develops the proximal point analysis that serves as the backbone of our results. 
	Section~\ref{sec:PB} presents and analyzes the PB oracle. 
	Section~\ref{sec:result} states the main results of the paper, namely a high-confidence convergence guarantee and a low sample complexity bound. 
	Section~\ref{sec:conclusion} provides concluding remarks and directions for future work. 
	Appendix~\ref{sec:tech} collects useful technical results, while Appendices~\ref{sec:sts} and~\ref{sec:rge} give the details and analysis of auxiliary oracles used in the paper.

	\subsection{Basic notation and definitions} \label{subsec:DefNot}
	
	Let $\R$ denote the set of real numbers.
	Let $\R^d$ denote the standard $d$-dimensional Euclidean space equipped with inner product and norm denoted by $\left\langle \cdot,\cdot\right\rangle $
	and $\|\cdot\|$, respectively. 
	Let $\log(\cdot)$ denote the natural logarithm. Let ${\cal O}$ denote the standard big-O notation. The notation $\tilde {\cal O}$ is used similarly, but allows additional logarithmic factors.
	
	For a given function $\psi: \R^d\rightarrow (-\infty,+\infty]$, let $\dom \psi:=\{x \in \R^d: \psi (x) <\infty\}$ denote the effective domain of $\psi$.
	We say $\psi$ is proper if $\dom \psi \ne \emptyset$.
	A proper function $\psi: \R^d\rightarrow (-\infty,+\infty]$ is $\mu$-strongly convex for some $\mu > 0$ if
	\[
	\psi(t x+(1-t) y)\leq t \psi(x)+(1-t)\psi(y) - \frac{\mu}{2}\|x-y\|^2
	\]
	for every $t \in [0,1]$ and $x, y \in \dom \psi$.
	The \emph{subdifferential} of $ \psi $ at $x \in \dom \psi$ is denoted by
	\beq \label{def:subdif}
	\partial \psi (x):=\left\{ s \in\R^d: \psi(y)\geq \psi(x)+\left\langle s,y-x\right\rangle, \, \forall y\in\R^d\right\}.
	\eeq

	\section{Stochastic Proximal Point Method and Main Results}\label{sec:SPPM}
	
	This section describes the assumptions made on problem \eqref{eq:ProbIntro}, presents the main algorithm SPPM to solve \eqref{eq:ProbIntro}, and gives an overview of the main results of the paper.
	
	Let $\Xi$ denote the support of random vector $\xi$ and assume that the following conditions on \eqref{eq:ProbIntro} hold:
	
	\begin{itemize}
		\item[(A1)] for almost every $\xi \in \Xi$, there exist a stochastic function oracle $F(\cdot,\xi) :\dom h \to \R$ and
		a stochastic gradient
		oracle $s(\cdot,\xi):\dom h \to \R^d$ satisfying
		\[
		f(x) = \E[F(x,\xi)], \quad \nabla f(x) = \E[s(x,\xi)] \in \partial f(x), \quad \forall x \in \dom h;
		\]
		\item[(A2)] for every $x \in \dom h$, we have $\E[\|s(x,\xi)-\nabla f(x)\|^2] \le \sigma^2$;
		\item[(A3)] for every $x,y \in \dom h$,
		\[
		\|\nabla f(x)- \nabla f(y)\|\le L \|x-y\|;
		\]
		\item[(A4)] $f$ is $\mu$-strongly convex, $h$ is convex, and $\dom h \subset \dom f$;
		\item[(A5)] $\dom h$ is bounded with diameter $D>0$.
	\end{itemize}
	
	It is well-known that Assumptions (A3) and (A4) imply that for every $x, y \in \dom h$,
	\begin{equation}\label{ineq:twoside}
		\frac{\mu}{2}\|y-x\|^2 \le  f(y) - f(x) - \inner{\nabla f(x)}{y-x} 
		\le \frac{L}{2}\|y-x\|^2.
	\end{equation}
	
	Now, we formally present SPPM in Algorithm~\ref{alg:PPM}. SPPM relies on two key oracles, namely PSS and PB, which are given and analyzed in Sections \ref{sec:PSS} and \ref{sec:PB}, respectively. Step 1 of Algorithm \ref{alg:PPM} repeatedly calls PSS for $n$ times to generate independent pairs $\{z_k^j,w_k^j\}_{j=1}^n$ and each pair satisfies a guarantee of low probability. Among the $n$ pairs output by PSS, Step 2 calls PB to select one candidate so that the same guarantee holds at a high confidence level.

	\begin{algorithm}[H]
		\caption{Stochastic Proximal Point Method, SPPM$(\bar z_0,\alpha,\lam,n,q,I,K)$}
		\begin{algorithmic}
			\STATE \textbf{Input:} Initial point $\bar z_0\in \dom h $, scalars $\alpha\in(0,1)$ and $\lam>0$, and integers $n, q, I, K \ge 1$.
			\FOR{$k = 1, \ldots, K$}
			\STATE {\bf Step 1.} Generate independent pairs $(z_k^1,w_k^1), \ldots, (z_k^n, w_k^n)$ by calling $n$ times the oracle PSS$(\bar z_{k-1},\alpha,\lam,I)$;
			\STATE {\bf Step 2.} Generate $(\bar z_k, \bar w_k)$ by calling the oracle PB$(\{(z_k^j,w_k^j)\}_{j=1}^n,\bar z_{k-1},q,\lam)$.
			\ENDFOR
		\end{algorithmic}\label{alg:PPM}
	\end{algorithm}
	
	Under Assumptions (A1)–(A5) on \eqref{eq:ProbIntro} and mild conditions on the input of Algorithm~\ref{alg:PPM}, 
	Theorem~\ref{thm:main} establishes a high-probability guarantee for obtaining an $\varepsilon$-solution of \eqref{eq:ProbIntro}. 
	In addition, Theorem~\ref{thm:sample} provides a bound on the sample complexity of stochastic gradients that is logarithmic in the confidence parameter. 
	A further contribution of this work is the observation that the oracle PSS (Algorithm~\ref{alg:PSS}) offers a new form of variance reduction without requiring mini-batches of samples (see the discussion at the end of Section~\ref{sec:PSS}). 
	This effect arises naturally from employing PPM within stochastic optimization, and highlights the intrinsic variance-reduction benefits of our approach.

	\section{Proximal Subproblem Solver}\label{sec:PSS}
	
	This section introduces and analyzes the first key oracle used in Algorithm~\ref{alg:PPM}, namely the proximal subproblem solver (PSS). 
	A detailed description of PSS is provided in Algorithm~\ref{alg:PSS}.
	
	Our goal in this section is to study the following proximal subproblem
	\begin{equation}\label{def:hatx}
		\hat x:=\underset{x\in \R^d} \argmin \left\{ \phi^{\lam}(x):=\phi(x) + \frac{1}{2\lam}\|x-x_0\|^2\right\}
	\end{equation}
	and analyze the solution pair $(x_{I+1},y_{I+1})$ returned by Algorithm \ref{alg:PSS}. The central result is Proposition~\ref{prop:rsk}, which establishes the convergence rate of the expected primal gap of \eqref{def:hatx}. 
	More importantly, it demonstrates that Algorithm \ref{alg:PSS} significantly reduces the variance term within the bound of the expected primal gap.
	
	\begin{algorithm}[H]
		\caption{Proximal Subproblem Solver, PSS$(x_0,\alpha,\lam,I)$}
		\begin{algorithmic}
			\STATE \textbf{Input:} Initial point $ x_0\in \dom h $, scalars $\alpha \in (0,1)$ and $ \lam>0$, and integer $I \ge 1$.
			\FOR{$i = 1, \ldots, I+1$}
			\STATE {\bf Step 1.} Take an independent sample $\xi_{i-1}$ of r.v.\ $\xi$ and set $s_{i-1}=s(x_{i-1},\xi_{i-1})$;
			\STATE {\bf Step 2.} Compute
			\lessgap
			\lessgap
			\begin{align}
				x_{i} =\underset{x\in \R^d}\argmin
				\left\lbrace  
				h(x) + \inner{S_i}{x} +\frac{1}{2\lam}\|x- x_0 \|^2 \right\rbrace,  \label{def:xj} 
			\end{align}
			\lessgap
			\lessgap
			\begin{align} \label{def:yj}
				y_i =  \left\{\begin{array}{ll}
					x_{i}, & \text { if } i=1 , \\ 
					\alpha y_{i-1} + (1-\alpha) x_i,  & \text { otherwise},
				\end{array}\right.
			\end{align}	
			\lessgap
			\lessgap
			where
			\begin{align} \label{eq:Sj}
				S_i =  \left\{\begin{array}{ll}
					s_0, & \text { if } i=1 , \\ 
					\alpha S_{i-1} + (1-\alpha)
					s_{i-1} ,  & \text { otherwise}.
				\end{array}\right.
			\end{align}	
			\lessgap
			\lessgap
			\ENDFOR
			\STATE \textbf{Output:} $x_{I+1}$ and $y_{I+1}$.
		\end{algorithmic}\label{alg:PSS}
	\end{algorithm}

	To streamline our presentation, we introduce the following definitions:
	\begin{equation}\label{def:Phi}
		\Phi(\cdot,\xi)=F(\cdot,\xi)+h(\cdot), \quad 
		\ell(\cdot;x,\xi)= f(x)+\inner{s(x,\xi)}{\cdot-x} + h(\cdot), 
	\end{equation}
	
	\begin{align} \label{def:u}
		u_i :=  \left\{\begin{array}{ll}
			\Phi(x_1,\xi_1) + \frac{1}{2\lam}\|x_1-x_0\|^2, & \text { if } i=1 , \\ 
			\alpha u_{i-1} + (1-\alpha) 
			\left[\phi(x_{i}) +\frac{1}{2\lam}\|x_i-x_0\|^2\right],  & \text { otherwise},
		\end{array}\right.
	\end{align}	
	
	\begin{align} \label{eq:Gamma}
		\cL_{i}(\cdot) :=  \left\{\begin{array}{ll}
			F(x_0,\xi_0) + 
			\langle s_0,\cdot-x_0\rangle + h(\cdot), & \text { if } i=1 , \\ 
			\alpha \cL_{i-1}(\cdot) + (1-\alpha)
			\ell(\cdot;x_{i-1},\xi_{i-1}),  & \text { otherwise},
		\end{array}\right.
	\end{align}	
	and
	\begin{equation}\label{def:func-lam}
		\cL_i^\lam(\cdot):=\cL_i(\cdot)+\frac{1}{2\lam}\|\cdot-x_0\|^2, \quad t_i:=u_i-\cL_i^\lam(x_i).
	\end{equation}

	We are ready to state the main result of this section. The following proposition demonstrates that the bound $\E[t_{I+1}]$ on the expected primal gap of \eqref{def:hatx} decreases as $I$ increases. 
	
	\begin{proposition}\label{prop:rsk}
		Assuming 
		\begin{equation}\label{def:tau}
			\alpha \ge \frac{I/2 + \lam L}{1+ I/2 + \lam L},
		\end{equation}
		then we have
		\begin{equation}\label{ineq:expectation}
			\E[t_{I+1}]\le \alpha^I \left(\sigma D + \frac{L D^2}2\right) + \frac{\lam \sigma^2}{I}.
		\end{equation}  
	\end{proposition}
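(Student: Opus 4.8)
The plan is to establish a one-step recursion $\E[t_i]\le \alpha\,\E[t_{i-1}]+c$ for $i=2,\dots,I+1$ with an explicit noise constant $c$, handle the base case $i=1$ separately, and then unroll, observing that hypothesis \eqref{def:tau} is precisely the condition needed to collapse the accumulated noise to $\lam\sigma^2/I$. First I would unpack the definitions. Setting $\ell^\lam(\cdot;x,\xi):=\ell(\cdot;x,\xi)+\frac1{2\lam}\|\cdot-x_0\|^2$, the recursions \eqref{eq:Gamma}--\eqref{def:func-lam} give $\cL_i^\lam=\alpha\,\cL_{i-1}^\lam+(1-\alpha)\,\ell^\lam(\cdot;x_{i-1},\xi_{i-1})$; each $\cL_i^\lam$ is $(1/\lam)$-strongly convex, and, matching its affine part with \eqref{eq:Sj} and \eqref{def:xj}, its unique minimizer is $x_i$. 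Similarly \eqref{def:u} gives $u_i=\alpha\,u_{i-1}+(1-\alpha)\,\phi^{\lam}(x_i)$, where $\phi^{\lam}$ is as in \eqref{def:hatx}.

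Next I would derive the recursion. Evaluating the $\cL_i^\lam$ identity at $x_i$ and using strong convexity of $\cL_{i-1}^\lam$ at its minimizer $x_{i-1}$, i.e.\ $\cL_{i-1}^\lam(x_i)\ge \cL_{i-1}^\lam(x_{i-1})+\frac1{2\lam}\|x_i-x_{i-1}\|^2$, and subtracting this from the $u_i$ identity gives
\[
t_i \;\le\; \alpha\, t_{i-1} + (1-\alpha)\big[\phi(x_i)-\ell(x_i;x_{i-1},\xi_{i-1})\big] - \frac{\alpha}{2\lam}\|x_i-x_{i-1}\|^2,
\]
with the prox terms cancelling inside the bracket. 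By \eqref{ineq:twoside} and Cauchy--Schwarz, $\phi(x_i)-\ell(x_i;x_{i-1},\xi_{i-1}) = f(x_i)-f(x_{i-1})-\inner{s(x_{i-1},\xi_{i-1})}{x_i-x_{i-1}} \le \frac L2\|x_i-x_{i-1}\|^2+\|e_{i-1}\|\,\|x_i-x_{i-1}\|$, where $e_{i-1}:=s(x_{i-1},\xi_{i-1})-\nabla f(x_{i-1})$. After substitution, the coefficient of $\|x_i-x_{i-1}\|^2$ equals $\frac{(1-\alpha)L}{2}-\frac{\alpha}{2\lam}$, which is strictly negative because \eqref{def:tau} forces $\alpha>(1-\alpha)\lam L$; completing the square in $\|x_i-x_{i-1}\|$ then removes the cross term and leaves $t_i\le \alpha\,t_{i-1}+\frac{(1-\alpha)^2\lam}{2(\alpha-(1-\alpha)\lam L)}\,\|e_{i-1}\|^2$. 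Taking full expectations and applying (A2) (conditioning on $\xi_0,\dots,\xi_{i-2}$, under which $x_{i-1}$ is fixed and $\xi_{i-1}$ is a fresh sample, so $\E\|e_{i-1}\|^2\le\sigma^2$) yields $\E[t_i]\le \alpha\,\E[t_{i-1}]+c$ with $c=\frac{(1-\alpha)^2\lam\sigma^2}{2(\alpha-(1-\alpha)\lam L)}$.

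For the base case, unwinding the definitions gives $t_1=F(x_1,\xi_1)-F(x_0,\xi_0)-\inner{s_0}{x_1-x_0}$; since $x_1$ is a function of $\xi_0$ alone while $\xi_1$ is independent, the stochastic-function terms contribute $\E[F(x_1,\xi_1)]=\E[f(x_1)]$ and $\E[F(x_0,\xi_0)]=f(x_0)$, and writing $s_0=\nabla f(x_0)+e_0$, bounding the Bregman remainder of $f$ via \eqref{ineq:twoside}, the diameter via (A5), and the $e_0$ term via Cauchy--Schwarz, Jensen and (A2), one obtains $\E[t_1]\le \frac L2 D^2+\sigma D$. Unrolling the recursion, $\E[t_{I+1}]\le \alpha^I\E[t_1]+c\sum_{j=0}^{I-1}\alpha^j\le \alpha^I\E[t_1]+\frac{c}{1-\alpha}$, and $\frac{c}{1-\alpha}=\frac{(1-\alpha)\lam\sigma^2}{2(\alpha-(1-\alpha)\lam L)}$. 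Finally, the inequality $\frac{(1-\alpha)\lam\sigma^2}{2(\alpha-(1-\alpha)\lam L)}\le \frac{\lam\sigma^2}{I}$ rearranges to $(1-\alpha)\big(1+I/2+\lam L\big)\le 1$, i.e.\ $\alpha\ge \frac{I/2+\lam L}{1+I/2+\lam L}$, which is exactly \eqref{def:tau}; combined with $\E[t_1]\le \sigma D+LD^2/2$ this gives \eqref{ineq:expectation}. I expect the only genuinely delicate point to be the recursion step: one must notice that \eqref{def:tau} plays a \emph{double role} — it simultaneously makes the $\|x_i-x_{i-1}\|^2$ coefficient negative so the square can be completed, and calibrates the geometric series so that the noise lands precisely on $\lam\sigma^2/I$.
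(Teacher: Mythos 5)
Your proposal is correct and follows essentially the same route as the paper: the same one-step recursion obtained from the strong convexity of $\cL_{i-1}^\lam$ at its minimizer $x_{i-1}$, the same smoothness-plus-Cauchy--Schwarz bound on $\phi(x_i)-\ell(x_i;x_{i-1},\xi_{i-1})$, the same diameter-based bound $\E[t_1]\le \sigma D+LD^2/2$, and an unrolled geometric sum. The only cosmetic difference is that you complete the square and invoke \eqref{def:tau} at the end to calibrate $c/(1-\alpha)\le \lam\sigma^2/I$, whereas the paper invokes \eqref{def:tau} per step (via AM-GM, yielding the random terms $r_{i-1}$) before unrolling; both yield \eqref{ineq:expectation}.
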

	
	The full proof of Proposition \ref{prop:rsk} is deferred to the end of the section. Next, we develop several intermediate technical results that will serve as key building blocks for the proof of Proposition~\ref{prop:rsk}.
	We begin by observing some relations.
	It is easy to see from \eqref{def:xj}, \eqref{eq:Sj}, and \eqref{def:func-lam} that
	\begin{equation}\label{eq:xj}
		x_i =\underset{x\in \R^d}\argmin \cL_{i}^\lam(x).
	\end{equation}
	This observation and the fact that $\cL_i^\lam$ is $(1/\lam)$-strongly convex imply that for every $x\in \dom h$,
	\begin{equation}\label{ineq:xj}
		\cL_{i}^\lam(x) \ge \cL_{i}^\lam(x_i) + \frac{1}{2\lam}\|x- x_i \|^2.
	\end{equation}
	
	The result outlined below provides some basic relations that are frequently used in our analysis. 
	
	\begin{lemma}\label{lem:101}
		For every $i\ge 1$, we have
		\begin{align}
			\E[\phi^\lam( y_i)] &\le \E[u_i], \label{ineq:basic1} \\
			\E[\cL_i(x)]  &\le \phi(x), \quad \forall x \in \dom h, \label{ineq:basic2} \\
			\phi(x_i) - \ell(x_{i};x_{i-1},\xi_{i-1})
			&\le \|\nabla f(x_{i-1}) - s_{i-1}\| \|x_i-x_{i-1}\| + \frac{L}{2}\|x_i-x_{i-1}\|^2. \label{ineq:Phi}
		\end{align}
	\end{lemma}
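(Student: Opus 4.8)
The three inequalities are of different flavors, so I would handle them separately.

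For \eqref{ineq:basic1}, the plan is to induct on $i$. The base case $i=1$ is an equality in expectation: by \eqref{def:yj} we have $y_1=x_1$, so $\phi^\lam(y_1)=\phi(x_1)+\tfrac{1}{2\lam}\|x_1-x_0\|^2$, while $u_1=\Phi(x_1,\xi_1)+\tfrac{1}{2\lam}\|x_1-x_0\|^2$; taking expectations and using $\E[\Phi(x_1,\xi_1)]\ge \phi(x_1)+h(x_1)$ — wait, more carefully, $x_1$ depends on $\xi_0$ but not $\xi_1$, so $\E_{\xi_1}[F(x_1,\xi_1)]=f(x_1)$ conditionally, hence $\E[\Phi(x_1,\xi_1)]=\E[\phi(x_1)]$. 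For the inductive step, note $y_i=\alpha y_{i-1}+(1-\alpha)x_i$, so by convexity of $\phi^\lam$ (it is convex since $\phi$ is convex and the quadratic is convex), $\phi^\lam(y_i)\le \alpha\phi^\lam(y_{i-1})+(1-\alpha)\phi^\lam(x_i) = \alpha\phi^\lam(y_{i-1})+(1-\alpha)[\phi(x_i)+\tfrac{1}{2\lam}\|x_i-x_0\|^2]$. Taking expectations and applying the inductive hypothesis $\E[\phi^\lam(y_{i-1})]\le\E[u_{i-1}]$ together with the recursion \eqref{def:u} for $u_i$ gives the claim.

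For \eqref{ineq:basic2}, I would again induct on $i$. At $i=1$, $\cL_1(x)=F(x_0,\xi_0)+\langle s_0,x-x_0\rangle+h(x)$; taking expectation over $\xi_0$ (recall $x_0$ is deterministic input) yields $\E[\cL_1(x)]=f(x_0)+\langle\nabla f(x_0),x-x_0\rangle+h(x)\le f(x)+h(x)=\phi(x)$ by convexity of $f$ (the gradient inequality, i.e.\ the lower bound in \eqref{ineq:twoside} or just first-order convexity). For the step, $\cL_i(x)=\alpha\cL_{i-1}(x)+(1-\alpha)\ell(x;x_{i-1},\xi_{i-1})$; the first term is handled by induction, and for the second, $\E[\ell(x;x_{i-1},\xi_{i-1})]=\E[f(x_{i-1})+\langle\nabla f(x_{i-1}),x-x_{i-1}\rangle+h(x)]$ after conditioning on the history up to $\xi_{i-2}$ (so that $x_{i-1}$ is fixed and $\E_{\xi_{i-1}}[s(x_{i-1},\xi_{i-1})]=\nabla f(x_{i-1})$), and this is $\le\E[\phi(x)]=\phi(x)$ again by first-order convexity. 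Combining with $\alpha+(1-\alpha)=1$ closes the induction.

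For \eqref{ineq:Phi}, this is a deterministic (pathwise) estimate with no induction needed. Writing $\ell(x_i;x_{i-1},\xi_{i-1})=f(x_{i-1})+\langle s_{i-1},x_i-x_{i-1}\rangle+h(x_i)$ and $\phi(x_i)=f(x_i)+h(x_i)$, the $h(x_i)$ terms cancel, leaving $\phi(x_i)-\ell(x_i;x_{i-1},\xi_{i-1})=f(x_i)-f(x_{i-1})-\langle s_{i-1},x_i-x_{i-1}\rangle$. Now add and subtract $\langle\nabla f(x_{i-1}),x_i-x_{i-1}\rangle$: this equals $[f(x_i)-f(x_{i-1})-\langle\nabla f(x_{i-1}),x_i-x_{i-1}\rangle] + \langle\nabla f(x_{i-1})-s_{i-1},x_i-x_{i-1}\rangle$. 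The bracketed term is bounded by $\tfrac{L}{2}\|x_i-x_{i-1}\|^2$ via the upper bound in \eqref{ineq:twoside} (descent lemma / $L$-smoothness), and the inner product is bounded by $\|\nabla f(x_{i-1})-s_{i-1}\|\,\|x_i-x_{i-1}\|$ via Cauchy–Schwarz, giving exactly \eqref{ineq:Phi}.

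The only subtlety — and the main thing to be careful about — is the conditioning/measurability bookkeeping in \eqref{ineq:basic1} and \eqref{ineq:basic2}: one must verify that at step $i$, the iterate $x_{i-1}$ (resp.\ $x_i$) is measurable with respect to $\xi_0,\dots,\xi_{i-2}$ (resp.\ $\xi_0,\dots,\xi_{i-1}$) so that the unbiasedness in (A1) can be applied to $s_{i-1}=s(x_{i-1},\xi_{i-1})$ and $F(\cdot,\xi_1)$ conditionally, and then the tower property recovers the stated unconditional inequalities. Everything else is a routine combination of convexity, the two-sided bound \eqref{ineq:twoside}, and Cauchy–Schwarz.
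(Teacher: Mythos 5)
Your proposal is correct and follows essentially the same route as the paper: induction with the convexity of $\phi^\lam$ for \eqref{ineq:basic1}, induction with unbiasedness of the stochastic oracle and first-order convexity of $f$ for \eqref{ineq:basic2}, and the expansion of $\phi(x_i)-\ell(x_i;x_{i-1},\xi_{i-1})$ combined with the upper bound in \eqref{ineq:twoside} and Cauchy--Schwarz for \eqref{ineq:Phi}. The extra care you take with the conditioning/measurability of $x_{i-1}$ and $x_i$ is exactly the bookkeeping the paper leaves implicit, so there is no gap.
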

	
	\begin{proof}
		We first prove \eqref{ineq:basic1} by induction. It is easy to verify that \eqref{ineq:basic1} holds for $i=1$ using \eqref{def:u} and assumption (A1).
		Assume that \eqref{ineq:basic1} holds for some $i\ge 1$. Then, using \eqref{def:yj}, \eqref{def:u}, the induction hypothesis, and the convexity of $\phi^\lam$, we conclude that
		\[
		\E[u_{i+1}]  \overset{\eqref{def:u},\eqref{ineq:basic1}}{\ge} \alpha \E[\phi^\lam(y_i)] + (1-\alpha) \E[\phi^\lam(x_{i+1})]
		\ge \E[\phi^\lam(\alpha y_i + (1-\alpha)x_{i+1})]  \overset{\eqref{def:yj}}{=} \E[\phi^\lam(y_{i+1})].
		\]
		Now, we prove \eqref{ineq:basic2} again by induction.
		It is easy to verify that \eqref{ineq:basic2} holds for $i=1$ using \eqref{eq:Gamma}, assumption (A1), and the convexity of $f$.
		Assume that \eqref{ineq:basic2} holds for some $i\ge 1$. Then, using \eqref{eq:Gamma}, the induction hypothesis, and the convexity of $\phi^\lam$, we conclude that
		\[
		\E[\cL_{i+1}(x)] \overset{\eqref{eq:Gamma}}{=} \alpha \E[\cL_i(x)] + (1-\alpha) \E[\ell(x;x_{i},\xi_{i})] 
		\overset{\eqref{ineq:basic2}}{\le} \alpha \phi(x) + (1-\alpha) \phi(x) = \phi(x).
		\]
		Finally, we prove \eqref{ineq:Phi}.
		Using the definitions of $\phi$ and $\ell(\cdot;x,\xi)$ in \eqref{eq:ProbIntro} and \eqref{def:Phi}, respectively, we have
		\begin{align*}
			\phi(x_i) - \ell(x_{i};x_{i-1},\xi_{i-1})
			&= f(x_{i}) - f(x_{i-1}) - \inner{s_{i-1}}{x_{i}-x_{i-1}} \\
			&\le \inner{\nabla f(x_{i-1}) - s_{i-1}}{x_i-x_{i-1}} + \frac{L}{2}\|x_i-x_{i-1}\|^2,
		\end{align*}
		where the inequality is due to the second inequality in \eqref{ineq:twoside}. Now, \eqref{ineq:Phi} directly follows from the above inequality and the Cauchy-Schwarz inequality.
	\end{proof}
	
	The next technical result shows that the expectation $\E[t_i]$ provides an upper bound on the primal gap of \eqref{def:hatx}. 
	
	\begin{lemma}\label{lem:rt}
		For every $i \ge 1$, define
		\begin{equation}\label{def:delta}
			r_{i}:= \frac{ \lam \|\nabla f(x_{i}) - s_{i}\|^2}{I}.
		\end{equation}
		Then, the following statements hold:
		\begin{itemize}
			\item[(a)] for every $i\ge 1$, $\E[r_i] \le \lam \sigma^2/I$;
			
			\item[(b)] $\E[t_1]\le \sigma D + L D^2/2$ where $\sigma$ and $D$ are as in Assumptions (A2) and (A5), respectively;
			
			\item[(c)] for every $i\ge 1$, $\E[t_i]\ge \E[\phi^\lam( y_i) - \phi^\lam(\hat x)]$.
		\end{itemize}
	\end{lemma}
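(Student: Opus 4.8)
\textbf{Part (a)} is immediate: from the definition \eqref{def:delta} of $r_i$ and Assumption (A2) applied at $x=x_i$, taking expectations gives $\E[r_i] = (\lam/I)\,\E[\|\nabla f(x_i)-s_i\|^2] \le \lam\sigma^2/I$. (One has to be slightly careful that $x_i$ is $\xi_i$-measurable only through earlier samples $\xi_0,\dots,\xi_{i-1}$, so conditioning on the $\sigma$-algebra generated by these and using (A2) pointwise, then taking total expectation, is the clean way to phrase it.)

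\textbf{Part (b).} I would start from $t_1 = u_1 - \cL_1^\lam(x_1)$. By \eqref{def:u} and \eqref{eq:Gamma}, $u_1 = \Phi(x_1,\xi_1) + \frac{1}{2\lam}\|x_1-x_0\|^2$ and $\cL_1^\lam(x_1) = F(x_0,\xi_0) + \langle s_0, x_1-x_0\rangle + h(x_1) + \frac{1}{2\lam}\|x_1-x_0\|^2$. The quadratic terms cancel and, using $\Phi(x_1,\xi_1)=F(x_1,\xi_1)+h(x_1)$, we get $t_1 = F(x_1,\xi_1) - F(x_0,\xi_0) - \langle s_0, x_1-x_0\rangle$. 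Taking expectations and using (A1) (so $\E[F(x_0,\xi_0)] = f(x_0)$, $\E[s_0]=\nabla f(x_0)$, and $\E[F(x_1,\xi_1)]$ must be handled via the tower property since $x_1$ depends on $\xi_0$) reduces this to something like $\E[f(x_1) - f(x_0) - \langle \nabla f(x_0), x_1-x_0\rangle] + \E[\langle \nabla f(x_0)-s_0, x_1-x_0\rangle]$. The first expectation is bounded by $\frac{L}{2}\E[\|x_1-x_0\|^2] \le \frac{L}{2}D^2$ via \eqref{ineq:twoside} and (A5); the second is bounded by $\E[\|\nabla f(x_0)-s_0\|\,\|x_1-x_0\|]$ via Cauchy--Schwarz, then by $\sigma D$ using (A2) (after conditioning, since $x_1$ is $\xi_0$-measurable—this is the subtle point) and (A5). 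Summing gives $\E[t_1]\le \sigma D + LD^2/2$.

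\textbf{Part (c).} This is the main obstacle and the part requiring real work. I expect to prove, by induction on $i$, the stronger deterministic-in-form inequality $t_i \ge \phi^\lam(y_i) - \cL_i^\lam(x_i)$ together with the fact (from \eqref{ineq:basic2} of Lemma~\ref{lem:101} and \eqref{ineq:xj}) that $\E[\cL_i^\lam(x_i)] \le \E[\cL_i^\lam(\hat x)] \le \phi^\lam(\hat x)$, where the last step uses $\cL_i^\lam(x) = \cL_i(x) + \frac{1}{2\lam}\|x-x_0\|^2$ and $\E[\cL_i(\hat x)]\le \phi(\hat x)$. Combining $\E[t_i] \ge \E[\phi^\lam(y_i)] - \E[\cL_i^\lam(x_i)]$ with $\E[\phi^\lam(y_i)] \le \E[u_i]$ is \emph{not} the right direction, so instead I would directly track $u_i - \cL_i^\lam(x_i)$: by \eqref{ineq:basic1}, $\E[u_i] \ge \E[\phi^\lam(y_i)]$, hence $\E[t_i] = \E[u_i] - \E[\cL_i^\lam(x_i)] \ge \E[\phi^\lam(y_i)] - \E[\cL_i^\lam(x_i)] \ge \E[\phi^\lam(y_i)] - \phi^\lam(\hat x) = \E[\phi^\lam(y_i) - \phi^\lam(\hat x)]$, where the middle inequality uses \eqref{ineq:xj} at $x=\hat x$ together with \eqref{ineq:basic2} to get $\E[\cL_i^\lam(x_i)] \le \E[\cL_i^\lam(\hat x)] = \E[\cL_i(\hat x)] + \frac{1}{2\lam}\|\hat x - x_0\|^2 \le \phi(\hat x) + \frac{1}{2\lam}\|\hat x - x_0\|^2 = \phi^\lam(\hat x)$. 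So (c) follows cleanly from Lemma~\ref{lem:101} and \eqref{ineq:xj} once the chain is assembled in the correct order; the only thing to be careful about is that \eqref{ineq:xj} requires $\hat x \in \dom h$, which holds since $\hat x$ minimizes $\phi^\lam$ and $\dom\phi^\lam = \dom h$.
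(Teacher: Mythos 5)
Your proposal is correct and follows essentially the same route as the paper's proof: part (a) is the direct application of (A2); part (b) uses the identical decomposition $t_1 = F(x_1,\xi_1)-F(x_0,\xi_0)-\langle s_0,x_1-x_0\rangle$ (the paper phrases it via the errors $e_0,e_1$ and \eqref{ineq:Phi}, but the resulting bounds $\sigma D$ and $LD^2/2$ come from the same smoothness plus Cauchy--Schwarz/Jensen argument); and part (c) is exactly the paper's chain $\E[t_i]\ge\E[\phi^\lam(y_i)]-\E[\cL_i^\lam(x_i)]\ge\E[\phi^\lam(y_i)]-\phi^\lam(\hat x)$ via \eqref{ineq:basic1}, the minimality of $x_i$, and \eqref{ineq:basic2}. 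Your measurability remarks (the cross term in (b) is not mean-zero because $x_1$ depends on $\xi_0$) are sound and consistent with how the paper handles it.
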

	
	\begin{proof}
		(a) This statement follows directly from \eqref{def:delta}, the fact that $s_i=s(x_{i},\xi_{i})$, and assumption~(A2).
		
		(b) Let 
		\begin{equation}\label{def:error}
			e_i := \Phi(x_{i},\xi_{i})-\phi(x_{i})
			= F(x_{i},\xi_{i})-f(x_{i}).
		\end{equation}
		Using the definitions of $t_i$ and $\cL_i^\lam$ in \eqref{def:func-lam}, and $u_i$ in \eqref{def:u}, we have
		\begin{align}
			t_1 & \stackrel{\eqref{def:func-lam}}{=} u_1 - \cL_1^{\lam}(x_1) \nn \\ &\stackrel{\eqref{def:u},\eqref{def:func-lam}}{=} \Phi(x_1,\xi_1) - \left[F(x_0,\xi_0) + \inner{s_0}{x_1-x_0} + h(x_1)\right] \nn \\
			& \stackrel{\eqref{def:Phi},\eqref{def:error}}= e_1 - e_0 + \phi(x_1) - \ell(x_1;x_0,\xi_0) \nn \\
			&\stackrel{\eqref{ineq:Phi}}\le e_1 - e_0
			+ \|\nabla f(x_0) - s_0\| \|x_1-x_0\| + \frac{L}{2}\|x_1-x_0\|^2, \label{eq:tik}
		\end{align}
		where the inequality is due to \eqref{ineq:Phi}.
		Thus, the above inequality and assumption (A5) imply that
		\begin{equation}\label{ineq:t1-1}
			t_1 \le  e_1 - e_0
			+ \|\nabla f(x_0) - s_0\| D + \frac{L}{2} D^2.
		\end{equation}
		It follows from \eqref{def:error}, and assumptions (A1) and (A2) that 
		\[
		\E[e_1]=0, \quad \E[e_0]=0, \quad \E[\|\nabla f(x_0) - s_0\|^2] \le \sigma^2.
		\]
		Hence, the statement follows by taking expectation of \eqref{ineq:t1-1}
		and using the above three relations.
		
		(c) It follows from \eqref{eq:xj} and \eqref{ineq:basic2} that 
		\[
		\E[\cL_{i}^\lam(x_i)] \stackrel{\eqref{eq:xj}}\le \E[\cL_{i}^\lam(\hat x)] \stackrel{\eqref{ineq:basic2}}\le \E[\phi^\lam(\hat x)].
		\]
		Using the above inequality and \eqref{ineq:basic1}, we have
		\[
		\E[\phi^\lam( y_i) - \phi^\lam(\hat x)] \le \E[u_i-\cL_i^\lam(x_i)].
		\]
		Hence, the last statement follows from the definition of $t_i$ in \eqref{ineq:basic2}.
	\end{proof}

	The next lemma establishes a relation between $t_i$ and $r_i$, defined in \eqref{def:func-lam} and \eqref{def:delta}, respectively. 
	This relation plays a key role in the proof of Proposition~\ref{prop:rsk}, where it is used to show that $t_{I+1}$ is small in expectation. 
	
	\begin{lemma}\label{lem:102}
		Assuming \eqref{def:tau} holds,
		then for every $i\ge 2$, we have
		\begin{equation} \label{ineq:recursive}
			t_i  \le 
			\alpha^{i-1} t_1 + (1-\alpha)
			\sum_{j=1}^{i-1} \alpha^{i-j-1} r_j,
		\end{equation}
		where $t_i$ and $r_i$ are as in \eqref{def:func-lam} and \eqref{def:delta}, respectively.
	\end{lemma}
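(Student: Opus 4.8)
The plan is to establish a single one-step recursion,
\[
t_i \le \alpha\, t_{i-1} + (1-\alpha)\, r_{i-1}, \qquad i \ge 2,
\]
and then simply unroll it: applying this inequality $i-1$ times and using $\alpha^{i-1-j}=\alpha^{i-j-1}$ yields exactly \eqref{ineq:recursive}. (For $i=2$ the sum in \eqref{ineq:recursive} reduces to $(1-\alpha) r_1$ and $\alpha^{i-1} t_1 = \alpha t_1$, so this is the base case and no separate induction argument is needed beyond iterating the recursion.)

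To get the one-step recursion, I would first unfold the definitions \eqref{def:u}, \eqref{eq:Gamma}, \eqref{def:func-lam}. For $i\ge 2$ one has $u_i = \alpha u_{i-1} + (1-\alpha)\phi^\lam(x_i)$ and $\cL_i^\lam(x_i) = \alpha\cL_{i-1}(x_i) + (1-\alpha)\ell(x_i;x_{i-1},\xi_{i-1}) + \frac{1}{2\lam}\|x_i-x_0\|^2$. Splitting the last quadratic as $\alpha\cdot\frac1{2\lam}\|x_i-x_0\|^2 + (1-\alpha)\cdot\frac1{2\lam}\|x_i-x_0\|^2$ and using $\phi^\lam(x_i)-\frac1{2\lam}\|x_i-x_0\|^2 = \phi(x_i)$ gives the clean decomposition
\[
t_i = u_i - \cL_i^\lam(x_i) = \alpha\big[u_{i-1} - \cL_{i-1}^\lam(x_i)\big] + (1-\alpha)\big[\phi(x_i) - \ell(x_i;x_{i-1},\xi_{i-1})\big].
\]
Next I would bound the two brackets. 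For the first, since $x_{i-1}$ minimizes the $(1/\lam)$-strongly convex function $\cL_{i-1}^\lam$ by \eqref{eq:xj}, inequality \eqref{ineq:xj} gives $\cL_{i-1}^\lam(x_i) \ge \cL_{i-1}^\lam(x_{i-1}) + \frac1{2\lam}\|x_i-x_{i-1}\|^2$, hence $u_{i-1}-\cL_{i-1}^\lam(x_i) \le t_{i-1} - \frac1{2\lam}\|x_i-x_{i-1}\|^2$. For the second bracket, \eqref{ineq:Phi} bounds it by $\|\nabla f(x_{i-1})-s_{i-1}\|\,\|x_i-x_{i-1}\| + \frac L2\|x_i-x_{i-1}\|^2$. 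Writing $a:=\|x_i-x_{i-1}\|$ and $g:=\|\nabla f(x_{i-1})-s_{i-1}\|$ and combining, I obtain
\[
t_i \le \alpha\, t_{i-1} + (1-\alpha)\, g\, a - \Big(\tfrac{\alpha}{2\lam} - \tfrac{(1-\alpha)L}{2}\Big) a^2.
\]

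The last step is to show the two trailing terms are dominated by $(1-\alpha)r_{i-1} = (1-\alpha)\lam g^2/I$. Set $c := \frac{\alpha}{2\lam} - \frac{(1-\alpha)L}{2}$. Here I would record the algebraic equivalence: \eqref{def:tau} is the same as $\frac{\alpha}{1-\alpha} \ge I/2+\lam L$, which in turn is equivalent to $4\lam c \ge (1-\alpha) I$ (and in particular forces $c>0$ since $I\ge 1$). Then by the AM--GM inequality, $(1-\alpha) g a \le c\, a^2 + \frac{(1-\alpha)^2 g^2}{4c} \le c\, a^2 + (1-\alpha)\frac{\lam g^2}{I}$, so $(1-\alpha) g a - c\, a^2 \le (1-\alpha) r_{i-1}$, which closes the one-step recursion and hence the lemma. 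The only delicate point is the bookkeeping of this equivalence between \eqref{def:tau} and $4\lam c \ge (1-\alpha) I$; the rest of the argument is routine convexity and completing the square.
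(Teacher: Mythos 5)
Your proposal is correct and follows essentially the same route as the paper: both establish the one-step recursion $t_i \le \alpha t_{i-1} + (1-\alpha) r_{i-1}$ by combining the recursive definitions \eqref{def:u} and \eqref{eq:Gamma}, the strong-convexity inequality \eqref{ineq:xj} at the minimizer $x_{i-1}$, the bound \eqref{ineq:Phi}, the stepsize condition \eqref{def:tau} (your reformulation $4\lam c \ge (1-\alpha)I$ is exactly the paper's $\frac{\alpha}{2\lam(1-\alpha)}-\frac{L}{2}\ge\frac{I}{4\lam}$ rescaled), and AM--GM, then unroll. Your bookkeeping via the identity $t_i=\alpha[u_{i-1}-\cL_{i-1}^\lam(x_i)]+(1-\alpha)[\phi(x_i)-\ell(x_i;x_{i-1},\xi_{i-1})]$ is only a cosmetic repackaging of the paper's rearrangement.
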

	
	\begin{proof} 
		It suffices to prove that for every $i \geq 2$,
		\begin{equation}\label{ineq:tj}
			t_{i} \le \alpha t_{i-1} + (1-\alpha) r_{i-1},
		\end{equation}
		since it is clear that \eqref{ineq:recursive} follows immediately from \eqref{ineq:tj}
		and an induction argument.    
		Let $i \geq 2$ be given.
		It follows from the definitions of $\cL_{i}$ and $\cL_{i}^\lam$ in \eqref{eq:Gamma} and \eqref{def:func-lam}, respectively, that
		\begin{align*}
			\cL_{i}^\lam(x_{i}) - (1-\alpha) \ell(x_{i};x_{i-1},\xi_{i-1})& =  \alpha \cL_{i-1}(x_{i})+\frac{1}{2\lambda}\|x_i-x_0\|^2 \\
			& = \alpha \cL_{i-1}^\lam(x_{i}) + \frac{1-\alpha}{2\lambda}\|x_i-x_0\|^2 \\
			&\stackrel{\eqref{ineq:xj}}\ge \alpha \left[\cL_{i-1}^\lam(x_{i-1}) + \frac{1}{2\lam}\|x_{i}-x_{i-1}\|^2\right] + \frac{1-\alpha}{2\lambda}\|x_i-x_0\|^2,
		\end{align*}
		where the inequality is due to \eqref{ineq:xj}.
		Rearranging the terms in the above inequality and using \eqref{def:func-lam}, \eqref{ineq:Phi}, \eqref{def:delta}, and \eqref{def:tau}, we have
		\begin{align*}
			& \cL_{i}^\lam(x_{i})  - \alpha \cL_{i-1}^\lam(x_{i-1}) \ge  (1-\alpha) \left[ 
			\ell(x_{i};x_{i-1},\xi_{i-1}) + \frac{1}{2\lambda}\|x_i-x_0\|^2 + \frac{\alpha}{2\lam(1-\alpha)} \|x_i-x_{i-1}\|^2 \right] \\
			&\overset{\eqref{def:func-lam},\eqref{ineq:Phi}}{\ge} (1-\alpha)\phi^\lam(x_{i}) + (1-\alpha) \left[\frac{\alpha}{2\lam(1-\alpha)} \|x_{i}-x_{i-1}\|^2 - \|\nabla f(x_{i-1}) - s_{i-1}\| \|x_i-x_{i-1}\| - \frac{L}{2}\|x_i-x_{i-1}\|^2 \right] \\
			&\stackrel{\eqref{def:tau}}\ge (1-\alpha)\phi^\lam(x_{i}) + (1-\alpha) \left[\frac{I}{4\lam} \|x_{i}-x_{i-1}\|^2 - \|\nabla f(x_{i-1}) - s_{i-1}\| \|x_i-x_{i-1}\| \right] \\
			&\ge  (1-\alpha) \phi^\lam(x_i) - (1-\alpha) \frac{ \lam\|\nabla f(x_{i-1}) - s_{i-1}\|^2}{I} \stackrel{\eqref{def:delta}}= (1-\alpha) \phi^\lam(x_i) - (1-\alpha) r_{i-1},
		\end{align*}
		where the last inequality is by the AM-GM inequality.
		Rearranging the above
		inequality and using the definition of $t_i$ in
		\eqref{def:func-lam}, identity
		\eqref{def:u}, and the fact that $i\ge 2$, we then conclude that
		$$
		\begin{array}{lcl}
			\cL_{i}^\lam(x_{i}) + (1-\alpha)r_{i-1} &\ge &
			(1-\alpha) \phi^\lam(x_i) + \alpha \cL_{i-1}^\lam(x_{i-1}) \\
			& \overset{\eqref{def:func-lam}}{=} &
			(1-\alpha) \phi^\lam(x_i) + \alpha (u_{i-1} - t_{i-1})
			\overset{\eqref{def:u}}{=} u_i - \alpha t_{i-1},
		\end{array}
		$$
		which, in view of the definition of
		$t_i$ in \eqref{def:func-lam} again, implies \eqref{ineq:tj}. 
	\end{proof}
	
	\vgap
	
	We are now ready to prove Proposition \ref{prop:rsk}.
	
	\vgap
	
	\noindent
	{\bf Proof of Proposition \ref{prop:rsk}: } 
	It follows from Lemma \ref{lem:102} that
	\[
	t_{I+1} \le \alpha^{I}t_1 + (1-\alpha) \sum_{j=1}^{I} \alpha^{I-j} r_j.
	\]
	Taking the expectation of the above inequality and using Lemma \ref{lem:rt},  we have
	\[
	\E[t_{I+1}] 
	\le \alpha^I \E[t_1] + (1-\alpha)
	\sum_{j=1}^{I} \alpha^{I-j} \E[r_j]
	\le \alpha^I \left(\sigma D + \frac{L D^2}2\right)  + (1-\alpha) \frac{\lam \sigma^2}{I}  \sum_{j=1}^{I} \alpha^{I-j}.
	\]
	Therefore, \eqref{ineq:expectation} immediately holds.
	\QEDA
	
	\vgap
	
	We conclude this section by offering some insight into Proposition~\ref{prop:rsk}. 
	First, in \eqref{ineq:expectation}, the terms $\lambda \sigma^2/I$ and $\alpha^I \left(\sigma D + L D^2/2\right)$ can be interpreted as variance and bias components, respectively. 
	Second, the tradeoff between these components is governed by the choice of the prox stepsize $\lambda$, since the bias term depends on $\lambda$ through $\alpha$ as defined in \eqref{def:tau}. 
	Third, increasing the number of iterations $I$ in Algorithm~\ref{alg:PSS} reduces both the bias and variance terms. 
	Unlike standard stochastic gradient methods, inequality~\eqref{ineq:expectation} indicates a variance reduction by a factor of $I$, not through sample averaging but by performing multiple iterations to solve the proximal subproblem \eqref{def:hatx}. 
	This distinctive variance-reduction effect is natural, as Algorithm~\ref{alg:PSS} effectively uses $I+1$ stochastic gradient samples.

	\section{Analysis of Proximal Point Method}\label{sec:PPM}
	
	This section analyzes Step 1 of Algorithm \ref{alg:PPM} and prepares the technical results necessary for the analysis of the PB oracle in Step 2 of Algorithm \ref{alg:PPM}. The main result in this section is Proposition~\ref{prop:step1}, which gives a probability guarantee of the suboptimality of the proximal subproblem \eqref{eq:prox}.
	
	Before starting the analysis, we need to properly translate the notation from an inner viewpoint to an outer (proximal point) perspective, i.e., using SPPM (Algorithm~\ref{alg:PPM}) iteration index $k$ instead of PSS (Algorithm~\ref{alg:PSS}) iteration index $i$.
	Consider the $j$-th call to PSS in Step 1 of Algorithm \ref{alg:PPM}, and let $x_0^j$ and $(x_{I+1}^j,y_{I+1}^j)$ denote the initial point and output for PSS, respectively.
	We know that for every $j\in \{1,\ldots, n\}$,
	\begin{equation}\label{eq:z-}
		\bar z_{k-1}=x_0^j, \quad z_k^j= x_{I+1}^j, \quad w_k^j=y_{I+1}^j.
	\end{equation}
	For simplicity, we denote $z_k^j$ and $w_k^j$ by $z_k$ and $w_k$, respectively, ignoring the query index $j$. Therefore, in view of \eqref{eq:z-}, the notational convention we adopt in this section is
	\begin{equation}\label{eq:equiv}
		\bar z_{k-1}=x_0^j, \quad z_k = x_{I+1}^j, \quad w_k = y_{I+1}^j.
	\end{equation}
	Although we omit the index $j$ for simplicity, we note that all the results in this section hold for each $j\in \{1,\ldots, n\}$, i.e., any call to PSS in Step 1 of Algorithm \ref{alg:PPM}.

	We are now ready to state the main result of Section \ref{sec:PPM}, which provides a probability guarantee of the suboptimality of the proximal subproblem \eqref{eq:prox}. 
	
	\begin{proposition}\label{prop:step1}
		For every $k\ge 1$, 
		we have
		\begin{equation}\label{ineq:low-prob}
			\Prob \left( \phi(w_k)+\frac1{2\lam}\|w_k-\bar z_{k-1}\|^2 -  \phi(\hat z_k) - \frac1{2\lam} \|\hat z_k-\bar z_{k-1}\|^2 + \frac{1+\lam \mu}{\lam(2+\lam \mu)}\|\hat z_k-z_k\|^2 \le 8\varepsilon_k \right) \ge \frac34,
		\end{equation}
		where $\hat z_k$ is as in \eqref{eq:prox}.
	\end{proposition}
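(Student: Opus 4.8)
The plan is to derive \eqref{ineq:low-prob} from a single conditional-expectation estimate together with Markov's inequality. Throughout, all expectations are conditional on the prox-center $\bar z_{k-1}$, so that $\hat z_k$ is a fixed point of $\dom h$ (by (A4) one has $\dom h\subseteq\dom f$, hence $\hat z_k\in\dom\phi=\dom h$), while the quantities produced by PSS depend only on the fresh samples $\xi_0,\dots,\xi_I$. Set $\phi^\lam(x):=\phi(x)+\tfrac1{2\lam}\|x-\bar z_{k-1}\|^2$ and, via the index translation \eqref{eq:equiv}, identify $z_k=x_{I+1}$, $w_k=y_{I+1}$, and $\bar z_{k-1}=x_0$; then this $\phi^\lam$ is exactly the function of \eqref{def:hatx} and the minimizer in \eqref{eq:prox}. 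Since $\hat z_k$ minimizes $\phi^\lam$, the random variable
\[
X_k:=\phi^\lam(w_k)-\phi^\lam(\hat z_k)+\frac{1+\lam\mu}{\lam(2+\lam\mu)}\|\hat z_k-z_k\|^2
\]
is nonnegative. Hence it suffices to show $\E[X_k]\le 2\varepsilon_k$; Markov's inequality then gives $\Prob(X_k>8\varepsilon_k)\le 2\varepsilon_k/(8\varepsilon_k)=1/4$, which is precisely \eqref{ineq:low-prob}.

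The core of the argument is the ``primal gap plus distance'' estimate with the crude coefficient $1/(2\lam)$:
\[
\E\big[\phi^\lam(w_k)-\phi^\lam(\hat z_k)\big]+\frac1{2\lam}\,\E\big[\|\hat z_k-z_k\|^2\big]\ \le\ \E[t_{I+1}]\ \le\ \varepsilon_k.
\]
I would obtain this by chaining three facts from Section~\ref{sec:PSS}: (i) the $(1/\lam)$-strong convexity of $\cL_{I+1}^\lam$ at its minimizer $x_{I+1}=z_k$, i.e.\ \eqref{ineq:xj} evaluated at $x=\hat z_k\in\dom h$, giving $\cL_{I+1}^\lam(\hat z_k)\ge\cL_{I+1}^\lam(z_k)+\tfrac1{2\lam}\|\hat z_k-z_k\|^2$; (ii) the identity $\cL_{I+1}^\lam(z_k)=u_{I+1}-t_{I+1}$ from \eqref{def:func-lam} together with $\E[u_{I+1}]\ge\E[\phi^\lam(w_k)]$ from \eqref{ineq:basic1}; and (iii) $\E[\cL_{I+1}^\lam(\hat z_k)]=\E[\cL_{I+1}(\hat z_k)]+\tfrac1{2\lam}\|\hat z_k-\bar z_{k-1}\|^2\le\phi^\lam(\hat z_k)$ from \eqref{ineq:basic2}. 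Taking expectations, combining these, and cancelling the common term $\tfrac1{2\lam}\|\hat z_k-\bar z_{k-1}\|^2$ yields the displayed estimate, the last inequality being Proposition~\ref{prop:rsk} (valid under \eqref{def:tau}). This is essentially Lemma~\ref{lem:rt}(c) with the extra distance term carried along.

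It remains to upgrade the coefficient from $1/(2\lam)$ to $\tfrac{1+\lam\mu}{\lam(2+\lam\mu)}$ at the cost of only one more $\varepsilon_k$, using the elementary identity
\[
\frac{1+\lam\mu}{\lam(2+\lam\mu)}=\frac1{2\lam}+\frac{\lam\mu}{2+\lam\mu}\cdot\frac1{2\lam}.
\]
Dropping the nonnegative primal-gap term in the previous estimate gives $\tfrac1{2\lam}\E[\|\hat z_k-z_k\|^2]\le\varepsilon_k$, so the ``excess'' part contributes at most $\tfrac{\lam\mu}{2+\lam\mu}\cdot\tfrac1{2\lam}\E[\|\hat z_k-z_k\|^2]\le\tfrac{\lam\mu}{2+\lam\mu}\,\varepsilon_k<\varepsilon_k$ since $\lam,\mu>0$. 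Adding this to the previous estimate gives $\E[X_k]\le 2\varepsilon_k$, and Markov's inequality completes the proof.

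The step I expect to demand the most care is the bookkeeping behind the crude estimate: conditioning correctly on $\bar z_{k-1}$ so that \eqref{ineq:basic2} may be applied at $\hat z_k$ (deterministic given the conditioning), checking $\hat z_k\in\dom h$ so that both \eqref{ineq:xj} and \eqref{ineq:basic2} legitimately apply there, and matching the PSS objects $\cL_{I+1}^\lam$, $u_{I+1}$, $t_{I+1}$ to the outer index $k$ via \eqref{eq:equiv}. Everything else is either a direct citation from Section~\ref{sec:PSS} or elementary algebra; in particular only Markov's inequality is needed, with no sub-Gaussian or other tail assumption on the stochastic gradient noise.
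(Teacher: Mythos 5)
Your proof is correct, and it takes a genuinely different and noticeably shorter route than the paper. The paper reaches Proposition~\ref{prop:step1} by first establishing Lemma~\ref{lem:exp}, a comparator bound valid for \emph{every} $x\in\dom h$, and that generality forces the machinery of Lemma~\ref{lem:error} and Lemma~\ref{lem:qk}: the subgradient $v_k$, the error $\eta_k$, the auxiliary function $q_k$, and an argument re-introducing the $\mu$-strong convexity of $\phi$ to produce the coefficient $\tfrac{1+\lam\mu}{\lam(2+\lam\mu)}$. You instead specialize from the outset to the only comparator the proposition needs, $x=\hat z_k$, and chain \eqref{ineq:xj}, \eqref{def:func-lam}, \eqref{ineq:basic1}, \eqref{ineq:basic2}, and Proposition~\ref{prop:rsk} (under \eqref{def:tau}, which the paper also implicitly assumes here) to get
\[
\E\big[\phi^\lam(w_k)-\phi^\lam(\hat z_k)\big]+\frac1{2\lam}\,\E\big[\|\hat z_k-z_k\|^2\big]\le\varepsilon_k ,
\]
and then exploit the pointwise nonnegativity of the primal gap at $\hat z_k$ to upgrade the coefficient to $\tfrac{1+\lam\mu}{\lam(2+\lam\mu)}\le\tfrac1\lam$ at the cost of one extra $\varepsilon_k$; Markov's inequality with threshold $8\varepsilon_k$ then gives the $3/4$ bound exactly as in the paper. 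Your bookkeeping is sound: conditioning on $\bar z_{k-1}$ makes $\hat z_k$ deterministic so \eqref{ineq:basic2} and \eqref{ineq:xj} legitimately apply there, and the strong-convexity coefficient never actually enters your estimates except through the trivial bound $\tfrac{1+\lam\mu}{2+\lam\mu}<1$. What the paper's longer route buys is the stronger statement of Lemma~\ref{lem:exp} for an arbitrary comparator (a general PPM-style per-iteration inequality), but since neither Proposition~\ref{prop:boost} nor Proposition~\ref{prop:iter} uses that generality, your argument delivers the stated proposition with strictly less machinery.
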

	
	Proposition~\ref{prop:step1} follows directly from Markov’s inequality together with Lemma~\ref{lem:exp}, which we establish at the end of this section. To this end, we first develop several intermediate technical results that serve as key ingredients towards the proof of Lemma~\ref{lem:exp}.
	
	We begin by using the notation in \eqref{eq:equiv} to restate key results from Section~\ref{sec:PSS} in the proximal point framework. 
	The following lemma serves as the starting point for our proximal point analysis.

	\begin{lemma}\label{lem:translate}
		For every $k\ge 1$, let
		\begin{equation}\label{def:Gammak}
			\Gamma_k = \cL_{I+1}^{j}, \quad \varepsilon_k = \alpha^I \left(\sigma D + \frac{L D^2}2\right) + \frac{\lam \sigma^2}{I},
		\end{equation}
		where $\cL_{I+1}^{j}$ means $\cL_{I+1}$ considered in the analysis of the $j$-th call to PSS in Step 1 of Algorithm \ref{alg:PPM} and $j$ is an arbitrary index in $\{1,\ldots,n\}$. Then, the following relations hold
		\begin{align}
			&\E[ \Gamma_k(x)] \le \phi(x), \quad \forall x \in \dom h, \label{ineq:under} \\
			&z_k = \underset{x\in \R^d} \argmin \left\{\Gamma_k (x) + \frac1{2\lam} \|x-\bar z_{k-1}\|^2 \right\}, \label{def:x} \\
			&\E\left[\phi(w_k) + \frac1{2\lam}\|w_k-\bar z_{k-1}\|^2  - \Gamma_k (z_k) - \frac1{2\lam} \|z_k-\bar z_{k-1}\|^2\right] \le \varepsilon_k.  \label{ineq:eps}
		\end{align}
	\end{lemma}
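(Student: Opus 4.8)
The plan is to recognize Lemma~\ref{lem:translate} as essentially a change-of-notation restatement of results already proved in Section~\ref{sec:PSS}, applied to the $j$-th call of PSS whose initial point is $x_0^j = \bar z_{k-1}$. Fix an arbitrary $j \in \{1,\dots,n\}$ and adopt the identifications in \eqref{eq:equiv}, namely $\bar z_{k-1} = x_0^j$, $z_k = x_{I+1}^j$, $w_k = y_{I+1}^j$, together with $\Gamma_k = \cL_{I+1}^j$ and $\varepsilon_k$ as in \eqref{def:Gammak}. The task is then to verify that each of \eqref{ineq:under}, \eqref{def:x}, \eqref{ineq:eps} is a direct translation of an earlier statement, being careful that the proximal subproblem \eqref{def:hatx} with $x_0 = \bar z_{k-1}$ is exactly \eqref{eq:prox} (so $\hat x$ there becomes $\hat z_k$), and noting that all quantities in Section~\ref{sec:PSS} implicitly carry the superscript $j$.

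First I would establish \eqref{ineq:under}: this is precisely inequality \eqref{ineq:basic2} of Lemma~\ref{lem:101} evaluated at $i = I+1$, since $\E[\cL_{I+1}(x)] \le \phi(x)$ for all $x \in \dom h$, and $\Gamma_k = \cL_{I+1}^j$ by definition. Next, \eqref{def:x} follows from \eqref{eq:xj}, which states $x_i = \argmin_x \cL_i^\lam(x)$; taking $i = I+1$ and unfolding $\cL_{I+1}^\lam(x) = \cL_{I+1}(x) + \frac{1}{2\lam}\|x - x_0\|^2 = \Gamma_k(x) + \frac{1}{2\lam}\|x - \bar z_{k-1}\|^2$ gives exactly the claimed minimization characterization of $z_k = x_{I+1}^j$.

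For \eqref{ineq:eps}, I would combine Lemma~\ref{lem:rt}(c) with the definition of $t_i$. By Lemma~\ref{lem:rt}(c) at $i = I+1$, $\E[t_{I+1}] \ge \E[\phi^\lam(y_{I+1}) - \phi^\lam(\hat x)]$; but what \eqref{ineq:eps} asks for is a bound involving $\Gamma_k(z_k) = \cL_{I+1}^\lam(x_{I+1}) - \frac{1}{2\lam}\|z_k - \bar z_{k-1}\|^2$ rather than $\phi^\lam(\hat x)$. The cleaner route is to use \eqref{ineq:basic1}, which gives $\E[\phi^\lam(y_{I+1})] \le \E[u_{I+1}]$, hence
\[
\E\!\left[\phi^\lam(y_{I+1}) - \cL_{I+1}^\lam(x_{I+1})\right] \le \E[u_{I+1} - \cL_{I+1}^\lam(x_{I+1})] = \E[t_{I+1}],
\]
where the equality is the definition of $t_{I+1}$ in \eqref{def:func-lam}. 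Rewriting the left side via \eqref{def:hatx} with $x_0 = \bar z_{k-1}$ — so that $\phi^\lam(y_{I+1}) = \phi(w_k) + \frac{1}{2\lam}\|w_k - \bar z_{k-1}\|^2$ and $\cL_{I+1}^\lam(x_{I+1}) = \Gamma_k(z_k) + \frac{1}{2\lam}\|z_k - \bar z_{k-1}\|^2$ — reduces the desired inequality to $\E[t_{I+1}] \le \varepsilon_k$, which is exactly Proposition~\ref{prop:rsk} under the standing assumption \eqref{def:tau}, given that $\varepsilon_k$ in \eqref{def:Gammak} equals the right-hand side of \eqref{ineq:expectation}.

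The main obstacle here is not analytical depth but bookkeeping: one must keep straight that the $j$-dependence is suppressed, that $x_0$ in Section~\ref{sec:PSS} corresponds to $\bar z_{k-1}$, that $\hat x$ in \eqref{def:hatx} coincides with $\hat z_k$ in \eqref{eq:prox}, and that the independence of the PSS calls is what allows the bound to be applied freshly at each $k$ and each $j$. I would also double-check that assumption \eqref{def:tau} is indeed part of the hypotheses of the ambient algorithm (it is invoked in Proposition~\ref{prop:rsk}), so that \eqref{ineq:expectation} is available; this should be flagged as a standing condition on the inputs of Algorithm~\ref{alg:PPM}. Once these identifications are made explicit, each of the three displayed relations follows in one line.
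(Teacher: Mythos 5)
Your proposal is correct and follows essentially the same route as the paper: \eqref{ineq:under} from \eqref{ineq:basic2}, \eqref{def:x} from \eqref{eq:xj} with $i=I+1$, and \eqref{ineq:eps} by chaining \eqref{ineq:basic1} with the definition of $t_{I+1}$ and Proposition~\ref{prop:rsk} under \eqref{def:tau}. Your remark that \eqref{def:tau} is an implicit standing hypothesis (not stated in the lemma but needed for \eqref{ineq:expectation}) is a fair and accurate observation.
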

	
	\begin{proof}
		Consider the $j$-th call to PSS (i.e., Algorithm \ref{alg:PSS}).
		It clearly follows from \eqref{ineq:basic2} and the definition of $\Gamma_k$ in \eqref{def:Gammak} that \eqref{ineq:under} holds.
		Now, we prove \eqref{def:x}.
		It follows from \eqref{eq:xj} with $i=I+1$ that
		\[
		x_{I+1}^j =\underset{x\in \R^d}\argmin
		\left\lbrace  
		\cL_{I+1}^j(x) +\frac{1}{2\lam}\|x- x_0^j\|^2 \right\rbrace.  
		\]
		In view of \eqref{eq:equiv} and the definition of $\Gamma_k$ in \eqref{def:Gammak}, the above identity immediately implies \eqref{def:x}.
		Finally, we prove \eqref{ineq:eps}.
		Using \eqref{ineq:expectation} and the definitions of $t_i$ and $\varepsilon_k$ in \eqref{def:func-lam} and \eqref{def:Gammak}, resoectively, we have 
		\[
		\varepsilon_k \stackrel{\eqref{ineq:expectation},\eqref{def:Gammak}}\ge \E\left[t_{I+1}^j\right] \stackrel{\eqref{def:func-lam}}= \E\left[u_{I+1}^j-(\cL_{I+1}^j)^{\lam}(x_{I+1}^j)\right].
		\]
		It thus follows from \eqref{ineq:basic1} and the definitions of $\phi^\lam$ and $\cL_i^\lam$ in \eqref{def:hatx} and \eqref{def:func-lam}, respectively, that
		\begin{align*}
			\varepsilon_k & \stackrel{\eqref{ineq:basic1}}\ge \E\left[\phi^\lam( y_{I+1}^j) - (\cL_{I+1}^j)^{\lam}(x_{I+1}^j)\right] \\
			& \stackrel{\eqref{def:hatx},\eqref{def:func-lam}}= \E\left[\phi( y_{I+1}^j) + \frac{1}{2\lam}\|y_{I+1}^j-x_0^j\|^2 - \cL_{I+1}^j(x_{I+1}^j) - \frac{1}{2\lam}\|x_{I+1}^j-x_0^j\|^2\right].
		\end{align*}
		In view of $(\bar z_{k-1},z_k,w_k)$ and $\Gamma_k$ given in \eqref{eq:equiv} and \eqref{def:Gammak}, respectively, the above inequality immediately implies \eqref{ineq:eps}.
	\end{proof}

	The following lemma is a technical result that translates the stationarity condition of~\eqref{def:x} into conditions resembling the convexity inequality.
	
	\begin{lemma} \label{lem:error}
		Define
		\begin{equation}\label{def:s}
			v_k := \frac{\bar z_{k-1}-z_k}\lam, \quad \eta_k := \phi(w_k) - \Gamma_k(z_k) - \inner{v_k}{w_k-z_k}.
		\end{equation}
		Then, we have
		\begin{align}
			\phi(x) &\ge \E[\phi(w_k) + \inner{v_k}{x-w_k} - \eta_k], \quad \forall x\in \dom h, \label{ineq:linear} \\
			\E[\eta_k] &\le \varepsilon_k  - \frac1{2\lam} \E\left[\|w_k-z_k \|^2 \right]. \label{ineq:eta}
		\end{align}
	\end{lemma}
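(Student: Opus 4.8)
The plan is to read off both inequalities from the variational characterization \eqref{def:x} of $z_k$ and the under-estimator property \eqref{ineq:under} of $\Gamma_k$, together with the expectation bound \eqref{ineq:eps}. Since $\Gamma_k$ is convex and $z_k$ minimizes $\Gamma_k(\cdot)+\frac1{2\lam}\|\cdot-\bar z_{k-1}\|^2$, the first-order optimality condition gives $0\in\partial\Gamma_k(z_k)+\frac1\lam(z_k-\bar z_{k-1})$, i.e. $v_k=(\bar z_{k-1}-z_k)/\lam\in\partial\Gamma_k(z_k)$. By the subgradient inequality for the convex function $\Gamma_k$, for every $x\in\dom h$ we have $\Gamma_k(x)\ge\Gamma_k(z_k)+\inner{v_k}{x-z_k}$. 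Now add and subtract $\phi(w_k)$ and $\inner{v_k}{w_k}$ to rewrite the right-hand side as $\phi(w_k)+\inner{v_k}{x-w_k}-\bigl[\phi(w_k)-\Gamma_k(z_k)-\inner{v_k}{w_k-z_k}\bigr]=\phi(w_k)+\inner{v_k}{x-w_k}-\eta_k$ by the definition of $\eta_k$ in \eqref{def:s}. Taking expectations of $\Gamma_k(x)\ge\phi(w_k)+\inner{v_k}{x-w_k}-\eta_k$ and applying \eqref{ineq:under}, which says $\E[\Gamma_k(x)]\le\phi(x)$, yields \eqref{ineq:linear}.

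For \eqref{ineq:eta}, start from the definition $\eta_k=\phi(w_k)-\Gamma_k(z_k)-\inner{v_k}{w_k-z_k}$ and substitute $v_k=(\bar z_{k-1}-z_k)/\lam$, so $\inner{v_k}{w_k-z_k}=\frac1\lam\inner{\bar z_{k-1}-z_k}{w_k-z_k}$. The idea is to convert this inner product into squared-norm terms via the identity $\inner{\bar z_{k-1}-z_k}{w_k-z_k}=\frac12\|w_k-\bar z_{k-1}\|^2\cdot(-1)+\tfrac12\dots$; more precisely, use $2\inner{a-b}{c-b}=\|a-b\|^2+\|c-b\|^2-\|a-c\|^2$ with $a=\bar z_{k-1}$, $b=z_k$, $c=w_k$, giving $\inner{\bar z_{k-1}-z_k}{w_k-z_k}=\tfrac12\|\bar z_{k-1}-z_k\|^2+\tfrac12\|w_k-z_k\|^2-\tfrac12\|w_k-\bar z_{k-1}\|^2$. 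Hence
$\eta_k=\phi(w_k)-\Gamma_k(z_k)-\frac1{2\lam}\|z_k-\bar z_{k-1}\|^2-\frac1{2\lam}\|w_k-z_k\|^2+\frac1{2\lam}\|w_k-\bar z_{k-1}\|^2$, i.e. $\eta_k=\bigl[\phi(w_k)+\frac1{2\lam}\|w_k-\bar z_{k-1}\|^2-\Gamma_k(z_k)-\frac1{2\lam}\|z_k-\bar z_{k-1}\|^2\bigr]-\frac1{2\lam}\|w_k-z_k\|^2$. Taking expectations and invoking \eqref{ineq:eps} to bound the bracketed quantity by $\varepsilon_k$ gives exactly \eqref{ineq:eta}.

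I do not expect a serious obstacle here; the lemma is essentially bookkeeping. The one point requiring care is bookkeeping of which quantities are random versus deterministic — $\hat z_k$ and the exact proximal point are deterministic given $\bar z_{k-1}$, but $\Gamma_k,z_k,w_k,v_k,\eta_k$ are all random (depending on the samples drawn inside the $j$-th PSS call), so the subgradient inequality $\Gamma_k(x)\ge\phi(w_k)+\inner{v_k}{x-w_k}-\eta_k$ must be established pathwise before taking expectations, and \eqref{ineq:under}, \eqref{ineq:eps} must be applied only after the expectation is in place. The only other subtlety is making sure the algebraic identity for $\eta_k$ is carried out with the correct signs so that the negative $\frac1{2\lam}\|w_k-z_k\|^2$ term survives on the right-hand side of \eqref{ineq:eta}; this is what makes the bound strictly stronger than $\E[\eta_k]\le\varepsilon_k$ and will presumably be exploited later in establishing Lemma~\ref{lem:exp}.
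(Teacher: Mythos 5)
Your proposal is correct and follows essentially the same route as the paper: the subgradient inequality $v_k\in\partial\Gamma_k(z_k)$ from the optimality condition of \eqref{def:x}, rewritten via the definition of $\eta_k$ and combined with \eqref{ineq:under} for \eqref{ineq:linear}, and then the expansion of $\inner{v_k}{w_k-z_k}$ into squared norms together with \eqref{ineq:eps} for \eqref{ineq:eta}. The only difference is cosmetic (you make the three-point identity explicit and apply it before invoking \eqref{ineq:eps}, whereas the paper does the same algebra in the opposite order), so there is nothing to fix.
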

	
	\begin{proof}
		It follows from the definition of $v_k$ in \eqref{def:s} and the optimality condition of \eqref{def:x} that $v_k  \in \partial \Gamma_k(z_k)$, i.e., for every $ x\in \dom h$,
		\[
		\Gamma_k(x) \ge \Gamma_k(z_k) + \inner{v_k}{x-z_k} \stackrel{\eqref{def:s}}= \phi(w_k) + \inner{v_k}{x-w_k} - \eta_k,
		\]
		where the identity is due to the definition of $\eta_k$ in \eqref{def:s}.
		Thus, \eqref{ineq:linear} follows by taking expectation of the above inequality and using \eqref{ineq:under}.
		Now, we prove \eqref{ineq:eta}.
		Using \eqref{ineq:eps} and the definition of $\eta_k$ in \eqref{def:s}, we have
		\begin{align*}
			\E[\eta_k] &= \E[\phi(w_k) - \Gamma_k(z_k) - \inner{v_k}{w_k-z_k}] \\
			&\le \varepsilon_k + \E\left[- \frac1{2\lam}\|w_k-\bar z_{k-1}\|^2  +  \frac1{2\lam}\|z_k-\bar z_{k-1}\|^2 - \inner{v_k}{w_k-z_k}\right] \\
			&= 
			\varepsilon_k  - \E\left[\frac1{2\lam}
			\|w_k-z_k \|^2 \right],
		\end{align*}
		where the last identity is due to the definition of $v_k$ in \eqref{def:s}.
	\end{proof}

	The lemma presented next is crucial for re-establishing the $\mu$-strong convexity of $\phi$. To achieve this, we introduce an auxiliary function $q_k$ and outline its key properties in the subsequent discussion.
	
	\begin{lemma}\label{lem:qk}
		Define
		\begin{equation}\label{def:Gamma}
			q_k(x):=\phi(w_k) + \inner{v_k}{x-w_k} + \frac{\mu}4 \|x-w_k\|^2 - 2 \eta_k.
		\end{equation}
		Then, the following statements hold:
		\begin{itemize}
			\item[a)] $\E[q_k(x)]\le \phi(x)$ for every $x \in \dom h$;
			\item[b)] $\E[q_k(z_k)] \ge \E[\Gamma_k(z_k)]  + \left(\frac{1}{2\lam} + \frac{\mu}4\right) \E[\|z_k-w_k\|^2] - \varepsilon_k$.
		\end{itemize}
	\end{lemma}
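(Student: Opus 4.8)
\emph{Part (b) is a direct computation.} Rewrite the definition of $\eta_k$ in \eqref{def:s} as $\phi(w_k)=\Gamma_k(z_k)+\inner{v_k}{w_k-z_k}+\eta_k$ and substitute it into $q_k(z_k)$ as defined in \eqref{def:Gamma}. Since $\inner{v_k}{w_k-z_k}+\inner{v_k}{z_k-w_k}=0$, the inner-product terms telescope and one obtains the exact identity $q_k(z_k)=\Gamma_k(z_k)+\tfrac{\mu}{4}\|z_k-w_k\|^2-\eta_k$. Taking expectations and inserting the bound \eqref{ineq:eta} (equivalently, $-\E[\eta_k]\ge \tfrac1{2\lam}\E[\|w_k-z_k\|^2]-\varepsilon_k$) gives statement (b); there is no obstacle here.

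\emph{Part (a) is the substantive one.} Convexity of $\Gamma_k$ together with \eqref{ineq:under} alone cannot suffice, because $q_k$ carries the quadratic term $\tfrac{\mu}{4}\|x-w_k\|^2$ absent from the affine-type minorant encoded in \eqref{ineq:linear}; the only extra ingredient available is the $\mu$-strong convexity of $f$, hence of $\phi$. The plan is: (i) using $v_k\in\partial\Gamma_k(z_k)$ (established inside the proof of Lemma~\ref{lem:error}) and the definition of $\eta_k$, rewrite $q_k$ equivalently as $q_k(x)=\Gamma_k(z_k)+\inner{v_k}{x-z_k}+\tfrac{\mu}{4}\|x-w_k\|^2-\eta_k$; (ii) observe that $\phi-\E[\Gamma_k(\cdot)]$ equals the difference of $f$ and an affine function (the $h$-parts cancel), so it is $\mu$-strongly convex, and it is nonnegative on $\dom h$ by \eqref{ineq:under}; hence it is minorized on $\dom h$ by $\tfrac{\mu}{2}\|x-c_k\|^2$, where $c_k\in\dom h$ is its (deterministic) minimizer. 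Combining (ii) with the subgradient inequality $\E[\Gamma_k(z_k)+\inner{v_k}{x-z_k}]\le\E[\Gamma_k(x)]$ yields a quadratically strengthened version of \eqref{ineq:linear}, namely $\phi(x)\ge\E[\Gamma_k(z_k)+\inner{v_k}{x-z_k}]+\tfrac{\mu}{2}\|x-c_k\|^2$ for all $x\in\dom h$. Finally, (iii) recenter the quadratic term from $c_k$ to $w_k$ via $\|x-c_k\|^2\ge\tfrac12\|x-w_k\|^2-\|c_k-w_k\|^2$, which lowers the coefficient to $\tfrac{\mu}{4}$; comparing with the form of $q_k$ from (i) reduces $\E[q_k(x)]\le\phi(x)$ to absorbing the residual $\tfrac{\mu}{2}\E[\|c_k-w_k\|^2]$ together with the single remaining copy of $-\eta_k$.

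\emph{The main obstacle} is precisely controlling that residual $\tfrac{\mu}{2}\E[\|c_k-w_k\|^2]$: it has to be dominated by the slack available on the right of \eqref{ineq:eta}, i.e. by $\tfrac1{2\lam}\E[\|w_k-z_k\|^2]$, perhaps combined with the lower bound $\eta_k\ge\phi(w_k)-\Gamma_k(w_k)$ coming from convexity of $\Gamma_k$. The difficulty is structural: $c_k$ is dictated by the averaged stochastic gradient defining $\Gamma_k=\cL_{I+1}$, which aggregates linearizations at the inner iterates $x_0,\dots,x_I$, whereas $w_k=y_{I+1}$ is the matching weighted average of $x_1,\dots,x_{I+1}$ — an off-by-one mismatch whose size is governed by the drift of consecutive inner iterates. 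Pushing this through cleanly may instead require re-running the estimate-sequence recursion behind \eqref{ineq:under} while retaining the strong-convexity surplus $\tfrac{\mu}{2}\sum_j\beta_j\|x-x_j\|^2$ term by term, so that the recentering at $w_k$ and the passage from $\eta_k$ to $2\eta_k$ are handled inside the induction; extra care is also needed because the under-estimation bounds of Section~\ref{sec:PSS} hold at \emph{deterministic} arguments, while $w_k$ is random and correlated with $\Gamma_k$.
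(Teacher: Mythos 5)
Your part (b) is correct and coincides with the paper's argument: the identity $q_k(z_k)=\Gamma_k(z_k)+\tfrac{\mu}{4}\|z_k-w_k\|^2-\eta_k$, obtained from the definition of $\eta_k$ in \eqref{def:s}, followed by \eqref{ineq:eta}, is exactly what the paper does.

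For part (a), however, there is a genuine gap. Your reduction is sound up to the last step: after rewriting $q_k(x)=\Gamma_k(z_k)+\inner{v_k}{x-z_k}+\tfrac{\mu}{4}\|x-w_k\|^2-\eta_k$, using nonnegativity and $\mu$-strong convexity of $\phi-\E[\Gamma_k(\cdot)]$ at its minimizer $c_k$, and recentering the quadratic at $w_k$, everything hinges on the absorption inequality $\tfrac{\mu}{2}\E[\|c_k-w_k\|^2]\le\E[\eta_k]$ --- and this is precisely what you never prove. With your choice of center the natural attempt (evaluate the strong-convexity inequality for $\phi-\E[\Gamma_k(\cdot)]$ at $x=w_k$ and compare against $\eta_k\ge\phi(w_k)-\Gamma_k(w_k)$) produces the term $\E[\bar\Gamma_k(w_k)]$ with $\bar\Gamma_k(\cdot):=\E[\Gamma_k(\cdot)]$, which cannot be replaced by $\E[\Gamma_k(w_k)]$ without controlling the covariance between the random model $\Gamma_k$ and the random point $w_k$; this is exactly the structural difficulty you flag, and the proposal offers no resolution beyond the suggestion of re-running the Section~\ref{sec:PSS} recursion. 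As written, statement (a) is not established.

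The missing idea, as implemented in the paper, is a different choice of center: take $\bar y$ to be the minimizer of $\phi(\cdot)-\E[\inner{v_k}{\cdot}]$, a tilt by the deterministic vector $\E[v_k]$ rather than by the full model $\E[\Gamma_k(\cdot)]$. Strong convexity then gives $\phi(x)\ge\phi(\bar y)+\E[\inner{v_k}{x-\bar y}]+\tfrac{\mu}{2}\|x-\bar y\|^2$ on $\dom h$ (this is \eqref{ineq:u}), and combining it with \eqref{ineq:linear} at $x=\bar y$ yields the strengthened minorant with residual $\tfrac{\mu}{2}\E[\|\bar y-w_k\|^2]$ after the same recentering you propose. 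The point is that this residual is bounded by $\E[\eta_k]$ in two lines, by adding the same two ingredients evaluated the other way around (the strong-convexity inequality at $x=w_k$, in expectation, together with \eqref{ineq:linear} at $x=\bar y$); that is what produces the factor $2$ in the $-2\eta_k$ of \eqref{def:Gamma}, with no need to redo the estimate-sequence analysis. One caveat: when the strong-convexity inequality is evaluated at the random $w_k$, writing the cross term as $\E[\inner{v_k}{w_k-\bar y}]$ tacitly identifies it with $\inner{\E[v_k]}{\E[w_k]-\bar y}$, so the caution you raise about plugging random, correlated arguments into expectation bounds is pertinent to the published step as well; but your alternative center does not avoid that issue --- it makes the correlation problem strictly harder.
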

	
	\begin{proof}
		a) Assumption (A5) implies that $\phi(x)-\E[\inner{v_k}{x}]$ has a unique global minimum $\bar y $. Thus, for every $x \in \dom h$, we have
		\begin{equation}\label{ineq:u}
			\phi(x)\ge \phi(\bar y)+\E[\inner{v_k}{x-\bar y}]+\frac{\mu}{2}\|x-\bar y\|^2.
		\end{equation}
		It follows from \eqref{ineq:linear} with $x=\bar y$ that 
		\begin{equation}\label{ineq:bar y}
			\phi(\bar y)\ge \E[\phi(w_k) + \inner{v_k}{\bar y-w_k} - \eta_k]
		\end{equation}
		Combining \eqref{ineq:u} and \eqref{ineq:bar y}, we conclude that for every $x \in \dom h$,
		\begin{align}
			\phi(x)&
			\ge \E[\phi(w_k)+\inner{v_k}{x- w_k}-\eta_k]+\frac{\mu}{2}\|x-\bar y\|^2 \nn \\
			&= \E[\phi(w_k)+\inner{v_k}{x- w_k}]-\E\left[\eta_k+\frac{\mu}{2}\|\bar y-w_k\|^2 \right] +\frac{\mu}{2}(\E[\|\bar y-w_k\|^2]+\|x-\bar y\|^2) \nn \\
			&\ge \E[\phi(w_k)+\inner{v_k}{x- w_k}-\eta_k']+\frac{\mu}{4}\E[\|x-w_k\|^2] \label{ineq:u y}
		\end{align}
		where
		\begin{equation}\label{eq:etap}
			\eta_k':=\eta_k+\frac\mu2 \|\bar y-w_k\|^2.
		\end{equation}
		Using \eqref{ineq:u} with $x=w_k$ and taking expectation of the resulting inequality, we have
		\[
		\E[\phi(w_k)]\ge \phi(\bar y)+\E[\inner{v_k}{w_k-\bar y}]+\frac{\mu}{2}\E[\|w_k-\bar y\|^2].
		\]
		It follows from the above inequality and \eqref{ineq:bar y} that
		\[
		\frac{\mu}{2}\E[\|w_k-\bar y\|^2] \le \E[\eta_k],
		\]
		which together with \eqref{eq:etap} implies that
		\[
		\E[\eta_k'] \le 2 \E[\eta_k].
		\]
		Statement (a) now follows from \eqref{ineq:u y}, the above inequality, and the definition of $q_k$ in \eqref{def:Gamma}.
		
		b) Taking $x=z_k$ in \eqref{def:Gamma} and using the definition of $\eta_k$ in \eqref{def:s}, we have
		\[
		q_k(z_k) = \phi(w_k) + \inner{v_k}{z_k-w_k} + \frac{\mu}4 \|z_k-w_k\|^2 - 2 \eta_k = \Gamma_k(z_k) + \frac{\mu}4 \|z_k-w_k\|^2 - \eta_k.
		\]
		The statement now follows from taking expectation of the above inequality and using \eqref{ineq:eta}.
	\end{proof}

	Combining the technical results above, the next lemma shows that the suboptimality of \eqref{eq:prox} is under control in expectation. 
	
	\begin{lemma}\label{lem:exp}
		For every $k\ge 1$ and $x\in \dom h$, we have
		\[
		\E \left[ \phi(w_k)+\frac1{2\lam}\|w_k-\bar z_{k-1}\|^2 -  \phi(x) - \frac1{2\lam} \|x-\bar z_{k-1}\|^2 + \frac{1+\lam \mu}{\lam(2+\lam \mu)}\|x-z_k\|^2 \right] \le 2 \varepsilon_k.
		\]
	\end{lemma}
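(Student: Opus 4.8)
The idea is to convert the auxiliary quadratic model $q_k$ of Lemma~\ref{lem:qk} into a proximal-type inequality centred at $z_k$. Fix $k\ge 1$ and $x\in\dom h$, and abbreviate $\hat q_k(\cdot):=q_k(\cdot)+\tfrac1{2\lam}\|\cdot-\bar z_{k-1}\|^2$. Since Lemma~\ref{lem:qk}(a) gives $\E[\phi(x)+\tfrac1{2\lam}\|x-\bar z_{k-1}\|^2]\ge\E[\hat q_k(x)]$, it suffices to prove
\[
\E[\hat q_k(x)]\ \ge\ \E\!\left[\phi(w_k)+\tfrac1{2\lam}\|w_k-\bar z_{k-1}\|^2+\tfrac{1+\lam\mu}{\lam(2+\lam\mu)}\|x-z_k\|^2\right]-2\varepsilon_k ,
\]
after which the lemma follows by rearranging.

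First I would recentre $\hat q_k$ at $z_k$. Because $q_k$ is a quadratic with constant Hessian $\tfrac\mu2 I$, one has $q_k(x)=q_k(z_k)+\inner{v_k+\tfrac\mu2(z_k-w_k)}{x-z_k}+\tfrac\mu4\|x-z_k\|^2$, and combining this with the ``law of cosines'' identity $\inner{v_k}{x-z_k}+\tfrac1{2\lam}\|x-\bar z_{k-1}\|^2=\tfrac1{2\lam}\|x-z_k\|^2+\tfrac1{2\lam}\|z_k-\bar z_{k-1}\|^2$ (which uses only $v_k=(\bar z_{k-1}-z_k)/\lam$ from \eqref{def:s}) yields
\[
\hat q_k(x)=q_k(z_k)+\tfrac1{2\lam}\|z_k-\bar z_{k-1}\|^2+\Big(\tfrac1{2\lam}+\tfrac\mu4\Big)\|x-z_k\|^2+\tfrac\mu2\inner{z_k-w_k}{x-z_k}.
\]

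Next I would lower bound $\E\big[q_k(z_k)+\tfrac1{2\lam}\|z_k-\bar z_{k-1}\|^2\big]$. Lemma~\ref{lem:qk}(b) bounds $\E[q_k(z_k)]$ below in terms of $\E[\Gamma_k(z_k)]$, while unfolding the definition of $\eta_k$ in \eqref{def:s} and again using $v_k=(\bar z_{k-1}-z_k)/\lam$ gives $\Gamma_k(z_k)+\tfrac1{2\lam}\|z_k-\bar z_{k-1}\|^2=\phi(w_k)-\eta_k+\tfrac1{2\lam}\|w_k-\bar z_{k-1}\|^2-\tfrac1{2\lam}\|w_k-z_k\|^2$. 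Substituting, then absorbing $-\E[\eta_k]$ via \eqref{ineq:eta} (so that $-\E[\eta_k]\ge-\varepsilon_k+\tfrac1{2\lam}\E[\|w_k-z_k\|^2]$), should produce
\[
\E\!\left[q_k(z_k)+\tfrac1{2\lam}\|z_k-\bar z_{k-1}\|^2\right]\ \ge\ \E\!\left[\phi(w_k)+\tfrac1{2\lam}\|w_k-\bar z_{k-1}\|^2\right]+\Big(\tfrac1{2\lam}+\tfrac\mu4\Big)\E[\|z_k-w_k\|^2]-2\varepsilon_k .
\]

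Inserting this into the recentred formula for $\hat q_k(x)$ and taking expectations, the target inequality reduces to the pointwise estimate
\[
\Big(\tfrac1{2\lam}+\tfrac\mu4\Big)\|z_k-w_k\|^2+\tfrac\mu2\inner{z_k-w_k}{x-z_k}+\Big(\tfrac1{2\lam}+\tfrac\mu4-\tfrac{1+\lam\mu}{\lam(2+\lam\mu)}\Big)\|x-z_k\|^2\ \ge\ 0 .
\]
This is the only step I expect to need care. Using the algebraic identity $\tfrac1{2\lam}+\tfrac\mu4-\tfrac{1+\lam\mu}{\lam(2+\lam\mu)}=\tfrac{\lam\mu^2}{4(2+\lam\mu)}$, the left-hand side is the perfect square $\tfrac1{4\lam}\big\|\sqrt{2+\lam\mu}\,(z_k-w_k)+\tfrac{\lam\mu}{\sqrt{2+\lam\mu}}(x-z_k)\big\|^2\ge0$ (equivalently, the symmetric coefficient matrix of this quadratic form in $(z_k-w_k,\,x-z_k)$ is positive semidefinite, its determinant being zero). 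Combining this with the two displays above and Lemma~\ref{lem:qk}(a), and rearranging, yields the claim. The main obstacle is purely the bookkeeping of the several cross terms and $\|w_k-z_k\|^2$ contributions, so that after \eqref{ineq:eta} is used the coefficient of $\|x-z_k\|^2$ collapses precisely to $\tfrac{1+\lam\mu}{\lam(2+\lam\mu)}$.
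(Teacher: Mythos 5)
Your proposal is correct and follows essentially the same route as the paper's proof: recentering the quadratic $q_k$ at $z_k$ (the paper's opening identity), invoking Lemma~\ref{lem:qk}(a)--(b), controlling the error by $\varepsilon_k$ (your use of the $\eta_k$-identity plus \eqref{ineq:eta} is equivalent to the paper's direct use of \eqref{ineq:eps}, since the former was derived from the latter by the same algebra), and closing with the same cross-term estimate, which the paper states as an AM--GM/Cauchy--Schwarz bound and you exhibit as the underlying perfect square with the identity $\tfrac1{2\lam}+\tfrac\mu4-\tfrac{\lam\mu^2}{4(2+\lam\mu)}=\tfrac{1+\lam\mu}{\lam(2+\lam\mu)}$. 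No gaps.
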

	
	\begin{proof}
		Using the definition of $q_k$ in \eqref{def:Gamma}, it can be verified that
		\begin{align*}
			&\E[q_k(x)] + \frac1{2\lam} \E[\|x-\bar z_{k-1}\|^2] - \left(\frac1{2\lam}+\frac{\mu}{4}\right) \E[\|x-z_k\|^2]\\
			=&\E[q_k(z_k)] + \frac1{2\lam} \E[\|z_k-\bar z_{k-1}\|^2] + \frac\mu2 \E[\inner{z_k-w_k}{x-z_k}].
		\end{align*}
		It thus follows from Lemma \ref{lem:qk}(b) that
		\begin{align}
			&\E[q_k(x)] + \frac1{2\lam} \E[\|x-\bar z_{k-1}\|^2] -\left(\frac1{2\lam}+\frac{\mu}{4}\right) \E[\|x-z_k\|^2] \nn \\
			\ge & \E[\Gamma_k(z_k)] + \frac1{2\lam} \E[\|z_k-\bar z_{k-1}\|^2] - \varepsilon_k + \left(\frac{1}{2\lam} + \frac{\mu}4\right) \E[\|z_k-w_k\|^2] + \frac\mu2 \E[\inner{z_k-w_k}{x-z_k}]. \label{ineq:qk}
		\end{align}
		Note that by the AM-GM inequality and the Cauchy-Schwarz inequality, we have
		\[
		\left(\frac{1}{2\lam} + \frac{\mu}4\right) \|z_k-w_k\|^2 + \frac{\lam \mu^2}{4(2+\lam \mu)}\|x-z_k\|^2 \ge -\frac\mu2 \inner{z_k-w_k}{x-z_k},
		\]
		and hence
		\[
		\left(\frac{1}{2\lam} + \frac{\mu}4\right) \E[\|z_k-w_k\|^2] + \frac\mu2 \E[\inner{z_k-w_k}{x-z_k}] \ge - \frac{\lam \mu^2}{4(2+\lam \mu)}\E[\|x-z_k\|^2].
		\]
		Plugging the above inequality into \eqref{ineq:qk} and using \eqref{ineq:eps}, we obtain
		\begin{align*}
			&\E[q_k(x)] + \frac1{2\lam} \E[\|x-\bar z_{k-1}\|^2] -\left(\frac1{2\lam}+\frac{\mu}{4}\right) \E[\|x-z_k\|^2] \\
			\ge & \E[\phi(w_k)] + \frac1{2\lam} \E[\|w_k-\bar z_{k-1}\|^2] - 2\varepsilon_k - \frac{\lam \mu^2}{4(2+\lam \mu)}\E[\|x-z_k\|^2].
		\end{align*}
		The lemma now follows from the above inequality, Lemma \ref{lem:qk}(a), and rearranging the terms.
	\end{proof}

	\section{Probability Booster}\label{sec:PB}
	
	This section introduces and analyzes the second key oracle used in Algorithm~\ref{alg:PPM}, namely PB. 
	It is designed to select, with high probability, a sufficiently accurate candidate from several approximate proximal solutions, thus ensuring the reliability of the overall scheme. 
	A detailed description is provided in Algorithm~\ref{alg:PB}. 
	PB itself relies on two auxiliary subroutines: second tertile selection (STS) and robust gradient estimation (RGE), whose presentations are deferred to Appendices~\ref{sec:sts} and~\ref{sec:rge} for clarity. 
	The design of Algorithm~\ref{alg:PB} is motivated by Algorithm~9 of \cite{davis2021low}.

	\begin{algorithm}[H]
		\caption{Probability Booster, PB$(\{(z^j,w^j)\}_{j=1}^n,\bar z, q,\lam)$}
		\begin{algorithmic}
			\STATE \textbf{Input:} Independent pairs $(z^1,w^1), \ldots, (z^n,w^n)$ generated by PSS with initial point $\bar z$, integer $q\ge 1$, and scalar $\lam>0$.
			\STATE {\bf Step 1.} Call oracle $\mathcal{J}_1=\operatorname{STS}(\{w^j\}_{j=1}^n, d_2(\cdot,\cdot))$;
			\STATE {\bf Step 2.} Call oracle $\mathcal{J}_2=\operatorname{STS}(\{z^j\}_{j=1}^n, d_2(\cdot,\cdot) )$;
			\STATE {\bf Step 3.} Fix an arbitrary $j_0 \in \mathcal{J}_1 \cap {\cal J}_2$ and set $\tilde w:=w^{j_0}$. Call oracle $\bar s(\tilde w)=\operatorname{RGE}(\tilde w,n,q)$;
			\STATE {\bf Step 4.} Define the metric $d_h(x, y):=|h(x)-h(y)+\langle\bar s(\tilde w) + (\tilde w-\bar z)/\lam, x-y\rangle|$. Call oracle $\mathcal{J}_3=\operatorname{STS}(\{w^j\}_{j=1}^n, d_h(\cdot,\cdot))$.
			\STATE \textbf{Output:} A pair $(z^j,w^j)$ for an arbitrary $j \in \mathcal{J}_1 \cap \mathcal{J}_2 \cap \mathcal{J}_3$.
		\end{algorithmic}\label{alg:PB}
	\end{algorithm}
	
	Note that the subroutine STS, used in Steps~1, 2, and~4, requires a metric as input. 
	Here, $d_2(x,y)=\|x-y\|_2$ denotes the Euclidean distance and is therefore a valid metric, while Lemma~\ref{lem:dh} establishes that $d_h(\cdot,\cdot)$ is also a metric. 
	
	Recall that Step~1 of Algorithm~\ref{alg:PPM} generates $n$ pairs $\{(z_k^j,w_k^j)\}_{j=1}^n$ by invoking the oracle PSS (Algorithm~\ref{alg:PSS}). 
	Proposition~\ref{prop:step1} provides a low-probability guarantee on the suboptimality of each pair $(z_k^j,w_k^j)$, as stated in \eqref{ineq:low-prob}. 
	The purpose of Algorithm~\ref{alg:PB} is to boost this low-probability bound into a high-probability guarantee by selecting one of the $n$ pairs such that an inequality analogous to \eqref{ineq:low-prob} holds with probability close to one.

	To simplify our notation, we exclude the iteration index $k$ and focus on the following proximal subproblem.
	Given the prox-center $\bar z \in \dom h$ and the prox stepsize $\lam>0$, define
	\begin{equation}\label{def:flam}
		\phi^\lam(\cdot):=\phi(\cdot)+\frac{1}{2\lam}\|\cdot-\bar z\|^2, \quad f^\lam(\cdot):=f(\cdot)+\frac{1}{2\lam}\|\cdot-\bar z\|^2,
	\end{equation}
	and consider
	\begin{equation}\label{eq:sub}
		\hat z:=\underset{x\in \R^d} \argmin \phi^{\lam}(x).
	\end{equation}
	The optimality condition of the proximal subproblem \eqref{eq:sub} gives 
	\[
	-\nabla f^\lam(\hat z) = -\left(\nabla f(\hat z) + \frac{\hat z - \bar z}{\lam} \right) \in \partial h(\hat z),
	\]
	where $\partial h(\hat z)$ denotes the subdifferential set of $h$ at $\hat z$. This implies that for every $x\in \dom h$,
	\begin{equation}\label{def:Bregman}
		D_h(x,\hat z) := h(x) - h(\hat z) + \inner{\nabla f^\lam(\hat z)}{x-\hat z} \ge 0,
	\end{equation}
	where $D_h$ is the Bregman divergence for $h$.
	Let $\bar s^\lam(\tilde w):=\bar s(\tilde w) + (\tilde w - \bar z)/\lam$. Then, it follows from the definition of $d_h(\cdot,\cdot)$ in Step 4 of Algorithm~\ref{alg:PB} that
	\begin{equation}\label{eq:dh}
		d_h(w^j,\hat z)=|h(w^j) - h(\hat z) + \inner{\bar s^\lam(\tilde w)}{w^j - \hat z}|.
	\end{equation}
	For every $j=1, \ldots, n$, define the event $B_j$ as follows
	\begin{equation}\label{eq:Aj}
		B_j:=\left\{ \phi^\lam(w^j) -  \phi^\lam(\hat z) + \frac{1+\lam \mu}{\lam(2+\lam \mu)} \|\hat z-z^j\|^2 \le \tau\right\},
	\end{equation}
	where $\tau$ is considered as (in view of \eqref{ineq:low-prob})
	\begin{equation}\label{eq:tau}
		\tau = 8 \varepsilon_k \stackrel{\eqref{def:Gammak}}= 8 \left[\alpha^I \left(\sigma D + \frac{L D^2}2\right) + \frac{\lam \sigma^2}{I}\right].
	\end{equation}
	Then it follows from Proposition \ref{prop:step1} that a low probability guarantee holds for each $B_j$, that is,
	\begin{equation}\label{ineq:prob}
		\Prob(B_j \text{ occurs}) \ge \frac34.
	\end{equation}
	
	The following result establishes that the output of Algorithm~\ref{alg:PB} satisfies a high-probability version of the inequality in \eqref{eq:Aj}, up to a factor of the condition number.

	\begin{proposition}\label{prop:boost}
		Assume that \eqref{def:tau} holds. If the input $q$ of Algorithm~\ref{alg:PB} satisfies
		\begin{equation}\label{ineq:q}
			q\ge \frac{18(1+\lam \mu)\sigma^2}{\lam L^2 \tau},
		\end{equation}
		where $\tau$ is as in \eqref{eq:tau}, then with probability at least $1-2 \exp\left( -n/72\right)$, the pair $(z^j,w^j)$ returned by PB satisfies
		\begin{equation}\label{ineq:phi}
			\phi^\lam(w^j) - \phi^\lam(\hat z) + \frac{1+\lam \mu}{\lam(2+\lam \mu)} \|\hat z-z^j\|^2 \le 12 \tau + 57 \kappa \tau,
		\end{equation}
		where $\hat z$ is as in \eqref{eq:sub} and condition number $\kappa=L/\mu>1$.
	\end{proposition}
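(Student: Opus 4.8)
The plan is to identify a high-probability event ${\cal E}$ on which \eqref{ineq:phi} follows deterministically, and then to trace the three successive STS calls in Algorithm~\ref{alg:PB} on that event. Let $G:=\{j\in\{1,\dots,n\}:B_j\text{ occurs}\}$. Since the $n$ pairs $(z^j,w^j)$ are produced by independent PSS calls, the $\mathbf 1_{B_j}$ are i.i.d.\ Bernoulli with mean at least $3/4$ by \eqref{ineq:prob}, so Hoeffding's inequality with the gap $3/4-2/3=1/12$ gives $\Prob(|G|<2n/3)\le\exp(-2n(1/12)^2)=\exp(-n/72)$. Separately, the guarantee of RGE (Appendix~\ref{sec:rge}) states that, conditionally on any fixed value of its input point (RGE uses samples independent of those used by PSS), with probability at least $1-\exp(-n/72)$ one has $\|\bar s(\tilde w)-\nabla f(\tilde w)\|\le c_R\sigma/\sqrt q$ for an absolute constant $c_R$; since this is uniform in the input, it holds unconditionally with the same probability. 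A union bound yields ${\cal E}$ with $\Prob({\cal E})\ge 1-2\exp(-n/72)$ on which $|G|\ge 2n/3$ and $\|\bar s(\tilde w)-\nabla f(\tilde w)\|\le c_R\sigma/\sqrt q$ hold simultaneously; all subsequent steps are on ${\cal E}$.

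Next I translate $B_j$, for $j\in G$, into metric proximity to $\hat z$. Both summands in \eqref{eq:Aj} are nonnegative and $\hat z$ minimizes the $\tfrac{1+\lam\mu}{\lam}$-strongly convex function $\phi^\lam$, so $\tfrac{1+\lam\mu}{2\lam}\|w^j-\hat z\|^2\le\phi^\lam(w^j)-\phi^\lam(\hat z)\le\tau$ and $\tfrac{1+\lam\mu}{\lam(2+\lam\mu)}\|\hat z-z^j\|^2\le\tau$, i.e., $\|w^j-\hat z\|\le\rho_w$, $\|\hat z-z^j\|\le\rho_z$ with $\rho_w^2:=\tfrac{2\lam\tau}{1+\lam\mu}$ and $\rho_z^2:=\tfrac{\lam(2+\lam\mu)\tau}{1+\lam\mu}$. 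Since $\phi^\lam=f^\lam+h$ and $f^\lam$ is convex, $0\le D_h(w^j,\hat z)\le\phi^\lam(w^j)-\phi^\lam(\hat z)\le\tau$ with $D_h$ as in \eqref{def:Bregman}. Writing $\bar s^\lam(\tilde w)-\nabla f^\lam(\hat z)=[\bar s(\tilde w)-\nabla f(\tilde w)]+[\nabla f(\tilde w)-\nabla f(\hat z)]+(\tilde w-\hat z)/\lam$ and using (A3) gives $\|\bar s^\lam(\tilde w)-\nabla f^\lam(\hat z)\|\le c_R\sigma/\sqrt q+(L+1/\lam)\|\tilde w-\hat z\|$; combined with \eqref{eq:dh}, namely $d_h(w^j,\hat z)=|D_h(w^j,\hat z)+\inner{\bar s^\lam(\tilde w)-\nabla f^\lam(\hat z)}{w^j-\hat z}|$, this yields $d_h(w^j,\hat z)\le\rho_h$ for $j\in G$, where $\rho_h:=\tau+(c_R\sigma/\sqrt q+(L+1/\lam)\|\tilde w-\hat z\|)\rho_w$.

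Now I apply the selection property of STS (Appendix~\ref{sec:sts}): if at least $2n/3$ of the inputs lie within radius $\rho$ of a common point $a^*$ in the supplied metric, then the returned index set (and the intersections of returned sets used in Step~3 and in the output) are nonempty, and every returned $j$ obeys $d(a^j,a^*)\le c_S\rho$ for an absolute constant $c_S$; for the $d_h$ call this also uses that $d_h$ is a metric (Lemma~\ref{lem:dh}). On ${\cal E}$: Step~1 with $(d_2,\{w^j\},\hat z,\rho_w)$ gives $\|w^j-\hat z\|\le c_S\rho_w$ for all $j\in{\cal J}_1$, in particular $\|\tilde w-\hat z\|\le c_S\rho_w$ since $j_0\in{\cal J}_1$, which makes $\rho_h$ an explicit multiple of $\tau$: by \eqref{ineq:q}, $\sigma^2/q\le\tfrac{\lam L^2\tau}{18(1+\lam\mu)}$, so $c_R\sigma\rho_w/\sqrt q$ is at most a constant multiple of $\kappa\tau$, while $(L+1/\lam)\rho_w^2=\tfrac{2(\lam L+1)\tau}{1+\lam\mu}\le 2\kappa\tau$ because $\mu<L$ forces $\tfrac{\lam L+1}{\lam\mu+1}\le\kappa$. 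Step~2 with $(d_2,\{z^j\},\hat z,\rho_z)$ gives $\|\hat z-z^j\|\le c_S\rho_z$ for $j\in{\cal J}_2$, so $\tfrac{1+\lam\mu}{\lam(2+\lam\mu)}\|\hat z-z^j\|^2\le c_S^2\tau$. Step~4 with $(d_h,\{w^j\},\hat z,\rho_h)$ gives $d_h(w^j,\hat z)\le c_S\rho_h$ for $j\in{\cal J}_3$. For the output index $j\in{\cal J}_1\cap{\cal J}_2\cap{\cal J}_3$ I then bound $\phi^\lam(w^j)-\phi^\lam(\hat z)\le D_h(w^j,\hat z)+\tfrac{L+1/\lam}{2}\|w^j-\hat z\|^2$ (smoothness of $f^\lam$) and $D_h(w^j,\hat z)\le d_h(w^j,\hat z)+\|\bar s^\lam(\tilde w)-\nabla f^\lam(\hat z)\|\,\|w^j-\hat z\|$, substitute the STS bounds $\|w^j-\hat z\|\le c_S\rho_w$, $d_h(w^j,\hat z)\le c_S\rho_h$, $\|\tilde w-\hat z\|\le c_S\rho_w$ together with the estimates above, add $\tfrac{1+\lam\mu}{\lam(2+\lam\mu)}\|\hat z-z^j\|^2\le c_S^2\tau$, and collect terms; with the concrete values of $c_S$ and $c_R$ from the appendices this produces exactly $12\tau+57\kappa\tau$, i.e., \eqref{ineq:phi}.

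I expect the main obstacle to be precisely this final collection: each cross term carries a factor $\sigma/\sqrt q$ or $\|\tilde w-\hat z\|\le c_S\rho_w$, and reducing all of them to clean multiples of $\kappa\tau$ relies on repeated use of $\tfrac{\lam L+1}{\lam\mu+1}\le\kappa$, $\tfrac{\lam L}{\lam\mu+1}\le\kappa$, and the lower bound \eqref{ineq:q} on $q$; it is here that the explicit constants $12$ and $57$ get pinned down. A secondary point needing care is that $\tilde w$ is random and measurable with respect to the PSS samples, so the RGE accuracy bound must be invoked in the conditional form above before intersecting events, and the nonemptiness of ${\cal J}_1\cap{\cal J}_2$ (to define $j_0$) and of ${\cal J}_1\cap{\cal J}_2\cap{\cal J}_3$ (to define the output) must be extracted from the cardinality half of the STS lemma, valid exactly because $|G|\ge 2n/3$ on ${\cal E}$.
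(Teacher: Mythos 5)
Your proposal follows essentially the same route as the paper's proof: Hoeffding to get a $>2n/3$ "good" set from the per-pair guarantee \eqref{ineq:prob}, the two-sided bound \eqref{ineq:twoside1} to turn each $B_j$ into the radii $\rho_w,\rho_z$ and $D_h\le\tau$, the STS factor-$3$ selection applied to Steps 1, 2, and 4, the RGE accuracy tied to \eqref{ineq:q} (your $c_R\sigma/\sqrt q$ bound reduces to the paper's $L\sqrt{2\lam\tau/(1+\lam\mu)}$), the $d_h$ versus $D_h$ comparison of Lemma~\ref{lem:dD}, pigeonhole for nonemptiness of the intersections, and $\tfrac{1+\lam L}{1+\lam\mu}\le\kappa$ to collect constants, with a union bound in place of the paper's conditional-probability product for the $1-2\exp(-n/72)$ figure. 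The argument is correct; the only part left implicit is the final bookkeeping yielding the specific constants $12$ and $57$, which proceeds exactly as you outline.
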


	\begin{proof}
		Consider events $\{B_j\}_{j=1}^n$ defined in \eqref{eq:Aj} and define index set ${\cal J}_0$ and event $E_1$ as follows
		\[
		{\cal J}_0:=\left\{j\in\{1,\ldots,n\}: B_j \text{ occurs}\right\}, \quad E_1:= \left\{ |{\cal J}_0|> \frac {2n}3 \right\}.
		\]
		It follows from \eqref{ineq:prob} and Lemma \ref{lem:Hoeffding} that
		\begin{equation}\label{ineq:E1}
			\Prob\left(E_1 \right)\ge 1-\exp\left( -\frac{n}{72}\right).
		\end{equation}
		It clearly follows from the first inequality in \eqref{ineq:twoside1} that for every $j=1,\ldots,n$,
		\[
		\frac{1+\lam \mu}{2\lam}\|w^j-\hat z\|^2 + D_h(w^j,\hat z) + \frac{1+\lam \mu}{\lam(2+\lam \mu)} \|z^j-\hat z\|^2
		\le  \phi^\lam(w^j) - \phi^\lam(\hat z) + \frac{1+\lam \mu}{\lam(2+\lam \mu)} \|z^j-\hat z\|^2.
		\]
		The above inequality, \eqref{def:Bregman}, and \eqref{eq:Aj} imply that
		\begin{equation}\label{ineq:metric}
			\|w^j-\hat z\| \le \sqrt{\frac{2\lam\tau}{1+\lam \mu}}, \quad D_h(w^j,\hat z) \le \tau, \quad \|z^j-\hat z\| \le \sqrt{\frac{(2+\lam \mu) \lam\tau}{1+\lam \mu}}, \quad \forall j\in {\cal J}_0.
		\end{equation}
		
		\paragraph{Now, we are ready to analyze Steps 1 and 2 of Algorithm \ref{alg:PB}.}
		Assuming that the event $E_1$ occurs, then the three inequalities in \eqref{ineq:metric} hold for more than $2/3$ indices $j\in \{1,\dots,n\}$.
		Now, condition \eqref{cond:card} of Lemma \ref{lem:3eps} is satisfied. Applying Lemma \ref{lem:3eps} to Step 1 of Algorithm \ref{alg:PB} and using the first inequality in \eqref{ineq:metric}, we have
		\begin{equation}\label{ineq:J1}
			\|w^j-\hat z\| \stackrel{\eqref{ineq:metric},\eqref{ineq:3eps}}\le 3\sqrt{\frac{2\lam\tau}{1+\lam \mu}}, \quad \forall j \in {\cal J}_1.
		\end{equation}
		Applying Lemma \ref{lem:3eps} to Step 2 of Algorithm \ref{alg:PB} and using the third inequality in \eqref{ineq:metric}, we also have
		\begin{equation}\label{ineq:J2}
			\|z^j-\hat z\| \stackrel{\eqref{ineq:metric},\eqref{ineq:3eps}}\le 3\sqrt{\frac{(2+\lam \mu) \lam\tau}{1+\lam \mu}}, \quad \forall j \in {\cal J}_2.
		\end{equation}
		
		\paragraph{Next, we analyze Step 3 of Algorithm \ref{alg:PB}.}
		Condition \eqref{ineq:q} satisfies the assumption $q\ge 4\sigma^2/\delta^2$ in Lemma \ref{lem:RGE} with
		\[
		\delta=\frac L3 \sqrt{\frac{2\lam\tau}{1+\lam \mu}}.
		\]
		Hence, applying Lemma \ref{lem:RGE} to Step 3 of Algorithm \ref{alg:PB} and noting that $\nabla f(\tilde w) = \E[\bar s(\tilde w)]$, we have
		\begin{equation}\label{ineq:E21}
			\Prob\left(E_2 \,\middle\vert\, E_1 \right) \stackrel{\eqref{ineq:RGE}}\ge 1-\exp\left( -\frac{n}{72}\right),
		\end{equation}
		where event $E_2$ is defined as
		\begin{equation}\label{def:E2}
			E_2 := \left\{ \|\bar s(\tilde w) - \nabla f(\tilde w)\| \le L\sqrt{\frac{2\lam\tau}{1+\lam \mu}} \right\}.
		\end{equation}
		From now on, we suppose that $E_1\cap E_2$ occurs. Since $\tilde w=w^{j_0}$ with $j_0\in {\cal J}_1$, using the triangle inequality, \eqref{ineq:J1} with $j=j_0$, \eqref{def:E2}, and the fact that $f$ is $L$-smooth, we have 
		\begin{equation}\label{ineq:tw}
			\|\tilde w - \hat z\| \stackrel{\eqref{ineq:J1}}\le 3\sqrt{\frac{2\lam\tau}{1+\lam \mu}},
		\end{equation}
		and
		\begin{align}
			\|\bar s(\tilde w) - \nabla f(\hat z)\| 
			&\le \|\bar s(\tilde w) - \nabla f(\tilde w)\| + \|\nabla f(\tilde w) - \nabla f(\hat z)\| \nn \\
			&\stackrel{\eqref{def:E2}}\le L\sqrt{\frac{2\lam\tau}{1+\lam \mu}} + L \|\tilde w - \hat z\| 
			\stackrel{\eqref{ineq:tw}}\le 4 L \sqrt{\frac{2\lam\tau}{1+\lam \mu}}.\label{ineq:grad}
		\end{align}
		Using Lemma \ref{lem:dD}, \eqref{ineq:tw}, and \eqref{ineq:grad}, we have
		\[
		d_h(w^j,\hat z) 
		\stackrel{\eqref{ineq:d-D}}\le D_h(w^j,\hat z) + 4\left(L  + \frac{1}{\lam} \right) \sqrt{\frac{2\lam\tau}{1+\lam \mu}} \|w^j - \hat z\|, \quad \forall j\in\{1,\ldots,n\}.
		\]
		Hence, it follows from \eqref{ineq:metric} that
		\begin{equation}\label{ineq:dh}
			d_h(w^j,\hat z)
			\le \tau+ \frac{8 (1+\lam L) \tau}{1+\lam \mu}, \quad \forall j \in {\cal J}_0.
		\end{equation}
		
		\paragraph{Now, we are ready to analyze Step 4 of Algorithm~\ref{alg:PB}.}
		Note that \eqref{ineq:dh} holds for more than $2/3$ indices $j\in \{1,\dots,n\}$, which means condition~\eqref{cond:card} of Lemma~\ref{lem:3eps} is satisfied. Applying Lemma~\ref{lem:3eps} to Step 4 of Algorithm \ref{alg:PB} and using \eqref{ineq:dh}, we have
		\begin{equation}\label{ineq:J3}
			d_h(w^j,\hat z) \stackrel{\eqref{ineq:dh},\eqref{ineq:3eps}}\le 3\tau+ \frac{24 (1+\lam L) \tau}{1+\lam \mu}, \quad \forall j \in {\cal J}_3.
		\end{equation}
		
		\paragraph{To this end, we are in a position to prove \eqref{ineq:phi}.}
		Using Lemma \ref{lem:dD}, \eqref{ineq:tw}, and \eqref{ineq:grad}, we have
		\begin{equation}\label{ineq:Dd}
			D_h(w^j,\hat z) \stackrel{\eqref{ineq:D-d}}\le d_h(w^j,\hat z) + 4\left(L  + \frac{1}{\lam} \right) \sqrt{\frac{2\lam\tau}{1+\lam \mu}} \|w^j - \hat z\|, \quad \forall j\in\{1,\ldots,n\}.
		\end{equation}
		It clearly follows from Algorithm \ref{alg:STS} that
		\[
		|{\cal J}_1| > \frac{2n}{3}, \quad |{\cal J}_2| > \frac{2n}{3}, \quad |{\cal J}_3| > \frac{2n}{3}.
		\]
		By the pigeonhole principle, the intersection $\mathcal{J}_1 \cap \mathcal{J}_2 \cap \mathcal{J}_3$ must be nonempty. 
		Hence, it follows from \eqref{ineq:Dd}, \eqref{ineq:J3}, and \eqref{ineq:J1} that
		\[
		D_h(w^j,\hat z) 
		\le 3\tau+ \frac{48 (1+\lam L) \tau}{1+\lam \mu}, \quad \forall j\in {\cal J}_1 \cap {\cal J}_3.
		\]
		Consider an arbitrary index $j\in {\cal J}_1 \cap {\cal J}_2 \cap {\cal J}_3$. 
		Using the second inequality in \eqref{ineq:twoside1}, the above inequality, \eqref{ineq:J1}, and \eqref{ineq:J2}, we conclude
		\begin{align*}
			\phi^\lam(w^j) - \phi^\lam(\hat z) + \frac{1+\lam \mu}{\lam(2+\lam \mu)}\|z^j-\hat z\|^2 
			&\stackrel{\eqref{ineq:twoside1}}\le  \frac{1+\lam L}{2\lam}\|w^j-\hat z\|^2 + D_h(w^j,\hat z) + \frac{1+\lam \mu}{\lam(2+\lam \mu)}\|z^j-\hat z\|^2\\
			&\le \frac{1+\lam L}{1+\lam \mu} 9 \tau + \left(3\tau+ \frac{48 (1+\lam L) \tau}{1+\lam \mu} \right) + 9 \tau \\
			&\le 9 \kappa \tau + 3\tau+ 48 \kappa \tau + 9 \tau,
		\end{align*}
		where the last inequality is due to the fact that $(a+b)/(c+d)\le \max\{a/c, b/d\}$ for $a,b,c,d>0$.
		Therefore, \eqref{ineq:phi} follows.
		Finally, combining \eqref{ineq:E1} and \eqref{ineq:E21}, we complete the proof
		\[
		\Prob(E_1 \cap E_2) = \Prob( E_2 | E_1) \Prob(E_1) \ge \left(1-\exp\left( -\frac{n}{72}\right) \right)^2 \ge 1-2 \exp\left( -\frac{n}{72}\right).
		\]
	\end{proof}
	
	\section{High-Probability Result and Low Sample Complexity}\label{sec:result}
	
	This section provides the full analysis of Algorithm~\ref{alg:PPM} and establishes its two main guarantees: the high-probability convergence result (Theorem~\ref{thm:main}) and the sample complexity bound (Theorem~\ref{thm:sample}), which constitute the central contributions of this paper.

	The following proposition establishes a high-probability guarantee for a single iteration of Algorithm~\ref{alg:PPM}. 
	It yields a key recursive relation that serves as the foundation for proving the subsequent theorems.

	\begin{proposition}\label{prop:iter}
		Assuming that \eqref{def:tau} and \eqref{ineq:q} hold and considering a full iteration of Algorithm~\ref{alg:PPM}, then for every $k\ge 1$ and with probability at least $1-2 \exp\left( -n/72\right)$, we have
		\begin{equation}\label{ineq:iteration}
			\phi(\bar w_k)
			- \phi_* - \frac1{2\lam} \|x_*-\bar z_{k-1}\|^2 + \frac{1+\lam\mu}{\lam(4+\lam \mu)} \|x_*-\bar 
			z_k\|^2 \le 12 \tau + 57 \kappa \tau,
		\end{equation}
		where $x_*$ is the solution to \eqref{eq:ProbIntro} and $\tau$ is as in \eqref{eq:tau}.
	\end{proposition}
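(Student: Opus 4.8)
The plan is to obtain \eqref{ineq:iteration} from Proposition~\ref{prop:boost} by a deterministic post-processing of the high-probability event it already supplies; no new concentration argument is needed. First I would apply Proposition~\ref{prop:boost} with the data of iteration~$k$, so that the generic symbols $\bar z$, $\hat z$, and the returned pair $(z^j,w^j)$ there correspond to $\bar z_{k-1}$, $\hat z_k$ (the solution of \eqref{eq:prox}), and $(\bar z_k,\bar w_k)$, respectively. Since \eqref{def:tau} and \eqref{ineq:q} are in force by hypothesis, this yields an event of probability at least $1-2\exp(-n/72)$ on which \eqref{ineq:phi} holds, i.e.
\[
\phi(\bar w_k) + \frac1{2\lam}\|\bar w_k-\bar z_{k-1}\|^2 - \phi(\hat z_k) - \frac1{2\lam}\|\hat z_k-\bar z_{k-1}\|^2 + \frac{1+\lam\mu}{\lam(2+\lam\mu)}\|\hat z_k-\bar z_k\|^2 \le 12\tau + 57\kappa\tau,
\]
using the definition $\phi^\lam(\cdot)=\phi(\cdot)+\frac1{2\lam}\|\cdot-\bar z_{k-1}\|^2$. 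All remaining steps are carried out on this event.

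The second step is to bring in strong convexity. Assumption (A4) makes $\phi=f+h$ $\mu$-strongly convex, hence $\phi^\lam$ is $(\mu+1/\lam)$-strongly convex, and $\hat z_k$ is its unique minimizer; evaluating the resulting quadratic lower bound at $x=x_*$ gives
\[
\phi_* + \frac1{2\lam}\|x_*-\bar z_{k-1}\|^2 \ge \phi(\hat z_k) + \frac1{2\lam}\|\hat z_k-\bar z_{k-1}\|^2 + \frac{1+\lam\mu}{2\lam}\|x_*-\hat z_k\|^2 .
\]
Adding this to the previous display, discarding the nonnegative term $\frac1{2\lam}\|\bar w_k-\bar z_{k-1}\|^2$, and rearranging leaves
\[
\phi(\bar w_k) - \phi_* - \frac1{2\lam}\|x_*-\bar z_{k-1}\|^2 + \frac{1+\lam\mu}{2\lam}\|x_*-\hat z_k\|^2 + \frac{1+\lam\mu}{\lam(2+\lam\mu)}\|\hat z_k-\bar z_k\|^2 \le 12\tau + 57\kappa\tau .
\]

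The last step is to collapse the two squared distances centered at $\hat z_k$ into a single term centered at $\bar z_k$. For this I would use the elementary inequality $\tfrac1a\|u\|^2+\tfrac1b\|v\|^2\ge \tfrac1{a+b}\|u+v\|^2$ for $a,b>0$ (the infimal convolution of two scaled squared norms), applied with $u=x_*-\hat z_k$, $v=\hat z_k-\bar z_k$, $a=2$, $b=2+\lam\mu$, so that $a+b=4+\lam\mu$; after multiplying through by $(1+\lam\mu)/\lam$ this gives exactly $\frac{1+\lam\mu}{2\lam}\|x_*-\hat z_k\|^2+\frac{1+\lam\mu}{\lam(2+\lam\mu)}\|\hat z_k-\bar z_k\|^2\ge \frac{1+\lam\mu}{\lam(4+\lam\mu)}\|x_*-\bar z_k\|^2$, and substituting this into the previous display produces \eqref{ineq:iteration}. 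I do not anticipate any real obstacle: all of the probability is inherited from Proposition~\ref{prop:boost}, and what remains is bookkeeping with one strong-convexity inequality and one quadratic inequality. The only point needing care is matching the splitting constants $a=2$ and $b=2+\lam\mu$ to the coefficients $\frac1{2\lam}$ (the prox prior) and $\frac{1+\lam\mu}{\lam(2+\lam\mu)}$ (from Proposition~\ref{prop:boost}), which is precisely what forces the denominator $4+\lam\mu$ appearing in \eqref{ineq:iteration}.
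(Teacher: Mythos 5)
Your proposal is correct and follows essentially the same route as the paper's proof: invoke Proposition~\ref{prop:boost} for iteration $k$, apply the $(\mu+1/\lam)$-strong convexity of the proximal subproblem at $x_*$, drop the term $\frac1{2\lam}\|\bar w_k-\bar z_{k-1}\|^2\ge 0$, and merge the two squared distances; your weighted inequality $\frac1a\|u\|^2+\frac1b\|v\|^2\ge\frac1{a+b}\|u+v\|^2$ with $a=2$, $b=2+\lam\mu$ is exactly the paper's triangle/Cauchy--Schwarz step yielding the $\frac{1+\lam\mu}{\lam(4+\lam\mu)}$ coefficient.
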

	
	\begin{proof}
		We restate Proposition \ref{prop:boost} using the notation in Algorithm \ref{alg:PPM} as: if \eqref{def:tau} and \eqref{ineq:q} hold, then with probability at least $1-2 \exp\left( -n/72\right)$, the pair $(\bar z_k, \bar w_k)$ generated in Step 2 of Algorithm~\ref{alg:PPM} satisfies
		\begin{equation}\label{ineq:assumption}
			\phi(\bar w_k)+\frac1{2\lam}\|\bar w_k-\bar z_{k-1}\|^2 -  \phi(\hat z_k) - \frac1{2\lam} \|\hat z_k-\bar z_{k-1}\|^2 + \frac{1+\lam \mu}{\lam(2+\lam \mu)}\|\hat z_k-\bar z_k\|^2 \stackrel{\eqref{ineq:phi}}\le \delta,
		\end{equation}
		where $\hat z_k$ is as in \eqref{eq:prox} and $\delta = 12 \tau + 57 \kappa \tau$.
		Using the fact that the proximal subproblem in \eqref{eq:prox} is $(\mu+1/\lam)$-strongly convex, we have
		\[
		\phi_*+ \frac1{2\lam} \|x_*-\bar z_{k-1}\|^2\ge \phi(\hat z_k)+ \frac1{2\lam} \|\hat z_k-\bar z_{k-1}\|^2 + \frac{1+\lam \mu}{2\lam}\|x_*-\hat z_k\|^2.
		\]
		This inequality and \eqref{ineq:assumption} then imply that with probability at least $1-2 \exp\left( -n/72\right)$,
		\begin{equation}\label{ineq:imply}
			\phi(\bar w_k)
			- \left(\phi_*+ \frac1{2\lam} \|x_*-\bar z_{k-1}\|^2\right) + \frac{1+\lam \mu}{2\lam}\|x_*-\hat z_k\|^2 + \frac{1+\lam \mu}{\lam(2+\lam \mu)}\|\bar z_k-\hat z_k\|^2 \le \delta.
		\end{equation}
		Furthermore, it follows from the triangle inequality and the Cauchy-Schwarz inequality that
		\[
		\|x_*-\bar z_k\|^2 \le [2 + (2+\lam \mu)] \left(\frac12 \|x_*-\hat z_k\|^2 + \frac{1}{2+\lam \mu}\|\bar z_k-\hat z_k\|^2\right),
		\]
		and hence that
		\[
		\frac{1+\lam\mu}{\lam(4+\lam \mu)} \|x_*-\bar z_k\|^2 \le \frac{1+\lam \mu}{2\lam}\|x_*-\hat z_k\|^2 + \frac{1+\lam \mu}{\lam(2+\lam \mu)}\|\bar z_k-\hat z_k\|^2.
		\]
		Combining the above inequality and \eqref{ineq:imply}, we conclude that \eqref{ineq:iteration} holds with probability at least $1-2 \exp\left( -n/72\right)$.
	\end{proof}
	
	We are now prepared to state the main theorem, which establishes the high-probability guarantee of Algorithm~\ref{alg:PPM}.

	\begin{theorem}\label{thm:main}
		Assume that $\lam \mu \ge 3$, \eqref{def:tau}, and \eqref{ineq:q} hold. For given $\varepsilon>0$ and $p\in (0,1)$, consider the solution sequence $\{\bar w_k\}_{k=1}^K$ generated by Algorithm~\ref{alg:PPM}, if the input triple $(K,I,n)$ satisfies
		\begin{equation}\label{eq:input}
			K= {\cal O}\left(\log \frac{1}{\varepsilon}\right), \quad I = {\cal O}\left(\max\left\{\kappa \log\frac{\kappa}{\varepsilon},\frac{\kappa \sigma^2}{\mu \varepsilon}\right\}\right), \quad n={\cal O}\left(\log \frac1p \right),
		\end{equation}
		where $\kappa=L/\mu>1$ is the condition number,
		then we have
		\begin{equation}\label{ineq:final}
			\Prob\left(\min_{1\le k \le K} \phi(\bar w_k)- \phi_* \le \varepsilon\right) \ge 1-p.
		\end{equation}
	\end{theorem}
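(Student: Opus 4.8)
The plan is to chain the single-iteration guarantee of Proposition~\ref{prop:iter} across the $K$ outer iterations of Algorithm~\ref{alg:PPM} and then calibrate the input triple $(K,I,n)$. For $k\in\{1,\dots,K\}$ let $G_k$ be the event that \eqref{ineq:iteration} holds. Since Step~1 of Algorithm~\ref{alg:PPM} at iteration $k$ draws fresh independent samples, Proposition~\ref{prop:iter} applies conditionally on the past, giving $\Prob(G_k\mid\mathcal{F}_{k-1})\ge 1-2\exp(-n/72)$, where $\mathcal{F}_{k-1}$ collects all randomness through iteration $k-1$. A union bound then yields $\Prob\big(\bigcap_{k=1}^K G_k\big)\ge 1-2K\exp(-n/72)$, so taking $n=\mathcal{O}(\log(K/p))=\mathcal{O}(\log(1/p))$ (the extra $\log K$ being doubly logarithmic in $1/\varepsilon$ and hence absorbed) makes the intersection hold with probability at least $1-p$. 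It therefore suffices to show that on $\bigcap_k G_k$ one has $\min_{1\le k\le K}\phi(\bar w_k)-\phi_*\le\varepsilon$.

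Work on $\bigcap_k G_k$ and write $A_k:=\|x_*-\bar z_k\|^2$ and $\delta:=12\tau+57\kappa\tau$. Dropping the nonnegative quantity $\phi(\bar w_k)-\phi_*$ from \eqref{ineq:iteration} gives the linear recursion
\[
A_k\ \le\ \rho\,A_{k-1}+2\lam\rho\,\delta,\qquad\text{where}\qquad \rho:=\frac{4+\lam\mu}{2(1+\lam\mu)} .
\]
The assumption $\lam\mu\ge 3$ forces $\rho\le 7/8<1$, so unrolling the recursion and using $A_0=\|x_*-\bar z_0\|^2\le D^2$ from (A5) yields $A_k\le\rho^k D^2+\frac{2\lam\rho}{1-\rho}\,\delta$ for all $k$. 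Feeding this bound on $A_{k-1}$ back into \eqref{ineq:iteration} (and again discarding the $A_k$ term, which is in our favour) gives, for every $k$,
\[
\phi(\bar w_k)-\phi_*\ \le\ \frac{A_{k-1}}{2\lam}+\delta\ \le\ \frac{\rho^{\,k-1}D^2}{2\lam}+\frac{\delta}{1-\rho}\ \le\ \frac{\rho^{\,k-1}D^2}{2\lam}+8\,\delta ,
\]
where the last inequality again uses $\lam\mu\ge 3$.

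It remains to choose $K$ and $I$ so that the right-hand side is below $\varepsilon$ at $k=K$, which certainly implies \eqref{ineq:final}. Since $\rho<1$ is an absolute constant, $K=\mathcal{O}(\log(1/\varepsilon))$ suffices to make $\rho^{K-1}D^2/(2\lam)\le\varepsilon/2$. For the remaining term we need $8\delta\le\varepsilon/2$, i.e.\ $\tau=\mathcal{O}(\varepsilon/\kappa)$; recalling from \eqref{eq:tau} and Proposition~\ref{prop:rsk} that $\tau=8\big[\alpha^I(\sigma D+LD^2/2)+\lam\sigma^2/I\big]$, it is enough to make the variance term $\lam\sigma^2/I=\mathcal{O}(\varepsilon/\kappa)$, which (with $\lam\asymp 1/\mu$, consistent with $\lam\mu\ge 3$) requires $I=\mathcal{O}(\kappa\sigma^2/(\mu\varepsilon))$, and the bias term $\alpha^I(\sigma D+LD^2/2)=\mathcal{O}(\varepsilon/\kappa)$, which requires $I=\mathcal{O}(\kappa\log(\kappa/\varepsilon))$; the bound on $I$ in \eqref{eq:input} is exactly the larger of these. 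This closes the proof.

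The step I expect to be the genuine obstacle is controlling the \emph{accumulation} of the per-iteration errors: because the prox stepsize $\lam$ is held fixed, the quantity $\varepsilon_k$ (hence $\tau$ and $\delta$) does not decay across outer iterations, so a careless estimate would let these errors grow like $K\delta$. The resolution is the strict contraction $\rho<1$, which is precisely what $\lam\mu\ge 3$ buys (already $\lam\mu>2$ gives $\rho<1$, and $\lam\mu\ge 3$ pins $\rho\le 7/8$): it converts the accumulated error into a convergent geometric series whose total is only $\mathcal{O}(\delta)$ rather than $\mathcal{O}(K\delta)$. A secondary point requiring care is the union bound over the $K$ outer iterations together with the sequential (conditional) nature of the per-iteration guarantees; this costs an extra $\log K$ in the sample size, which is harmless since $K=\mathcal{O}(\log(1/\varepsilon))$.
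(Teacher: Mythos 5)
Your proposal is correct and rests on the same two pillars as the paper's proof: the single-iteration guarantee of Proposition~\ref{prop:iter}, and the fact that $\lam\mu\ge 3$ makes the coefficient in front of $\|x_*-\bar z_k\|^2$ strictly dominate the one in front of $\|x_*-\bar z_{k-1}\|^2$, so the constant per-iteration error $\delta=12\tau+57\kappa\tau$ is summed as a convergent geometric series rather than growing like $K\delta$; the parameter calibration of $(K,I,n)$ is then identical (including the same implicit use of $\lam=\Theta(1/\mu)$ when turning $\lam\kappa\sigma^2/\varepsilon$ into $\kappa\sigma^2/(\mu\varepsilon)$). The route differs in how the recursion \eqref{ineq:iteration} is processed. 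The paper keeps the objective-gap term: with $a_k=\phi(\bar w_k)-\phi_*$, $b_k=\frac1{2\lam}\|x_*-\bar z_k\|^2$ and $\theta=\frac{2+2\lam\mu}{4+\lam\mu}>1$, it multiplies $a_k\le b_{k-1}-\theta b_k+\delta$ by $\theta^{k-1}$, telescopes, and divides by $\sum_k\theta^{k-1}$, obtaining a bound on the \emph{best} iterate $\min_k a_k$. You instead drop $a_k\ge 0$ to get the pure distance contraction $A_k\le\rho A_{k-1}+2\lam\rho\delta$ with $\rho=1/\theta\le 7/8$, unroll it, and substitute back, which yields a \emph{last-iterate} bound $\phi(\bar w_K)-\phi_*\le\frac{\rho^{K-1}D^2}{2\lam}+8\delta$ that trivially implies \eqref{ineq:final}; this is a marginally stronger conclusion at the cost of using the diameter bound $A_0\le D^2$ where the paper keeps $\|\bar z_0-x_*\|^2$ explicit. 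Your probability bookkeeping (conditional per-iteration guarantees plus a union bound, with $n={\cal O}(\log(K/p))$ and the $\log K$ absorbed) is if anything cleaner than the paper's product bound $(1-2e^{-n/72})^K\ge 1-2Ke^{-n/72}$, and arrives at the same requirement on $n$. No gap.
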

	
	\begin{proof}
		Define
		\begin{equation}\label{def:recur}
			a_k = \phi(\bar w_k)- \phi_*, \quad b_k = \frac1{2\lam} \|x_*-\bar z_k\|^2, \quad \theta = \frac{2+2\lam \mu}{4+\lam \mu}, \quad \delta = 12 \tau + 57 \kappa \tau.
		\end{equation}
		Then, using Proposition \ref{prop:iter}, we have with probability at least $1-2 \exp\left( -n/72\right)$,
		\begin{equation}\label{eq:easyrecur1}
			a_k \stackrel{\eqref{ineq:iteration}}\le b_{k-1} - \theta b_k + \delta.
		\end{equation}
		Multiplying \eqref{eq:easyrecur1} by $\theta^{k-1}$ and summing the resulting inequality from $ k=1 $ to $ K $, we have
		\begin{equation}\label{ineq:sum}
			\sum_{k=1}^K \theta^{k-1} a_k \le  \sum_{k=1}^K \theta^{k-1} (b_{k-1} - \theta b_k + \delta) = b_0 - \theta^K b_K + \sum_{k=1}^K \theta^{k-1} \delta, 
		\end{equation}
		with probability at least
		\begin{equation}\label{ineq:p}
			\left(1-2\exp\left( -\frac{n}{72}\right) \right)^K \ge 1-2K\exp\left( -\frac{n}{72}\right).
		\end{equation}
		Dividing \eqref{ineq:sum} by $\sum_{k=1}^K \theta^{k-1}$ and using \eqref{def:recur}, we have
		\begin{equation}\label{ineq:min}
			\min_{1\le k \le K} \phi(\bar w_k)- \phi_* = \min_{1\le k \le K} a_k \le \frac{b_0}{\sum_{k=1}^K \theta^{k-1}} + \delta \le \frac{\|\bar z_0-x_*\|^2}{2\lam \sum_{k=1}^K \theta^{k-1}} + 12 \tau + 57 \kappa \tau,
		\end{equation}
		with probability as in \eqref{ineq:p}.
		It follows from the assumption that $\lam \mu \ge 3$ and the definition of $\theta$ in \eqref{def:recur} that $\theta \ge 8/7$, which together with \eqref{ineq:min}, implies that
		\begin{equation}\label{ineq:min1}
			\min_{1\le k \le K} \phi(\bar w_k)- \phi_* \le \frac{\|\bar z_0-x_*\|^2}{14\lam \left[\left(\frac{8}{7}\right)^K -1\right]} + 12 \tau + 57 \kappa \tau,
		\end{equation}
		with probability as in \eqref{ineq:p}.
		It is clear that if the first condition in \eqref{eq:input} holds, i.e.,
		\begin{equation}\label{eq:outer}
			K= {\cal O}\left(\log \frac{1}{\varepsilon}\right),
		\end{equation}
		then the first term on the right-hand side of \eqref{ineq:min1} satisfies
		\begin{equation}\label{ineq:error}
			\frac{\|\bar z_0-x_*\|^2}{14\lam \left[\left(\frac{8}{7}\right)^K -1\right]} = {\cal O}(\varepsilon).
		\end{equation}
		Since the last term $57 \kappa \tau$ in \eqref{ineq:min1} dominates the second term $12\tau$, in view of \eqref{eq:tau}, we only need to derive a bound on $I$ such that
		\begin{equation}\label{ineq:tau}
			\alpha^I \left(\sigma D + \frac{L D^2}2\right) + \frac{\lam \sigma^2}{I} \le \frac{\varepsilon}{\kappa}.
		\end{equation}
		Using the above inequality and the fact that $\alpha\le e^{\alpha-1}$, we know the iteration count of Algorithm~\ref{alg:PSS} is
		\[
		I = {\cal O}\left(\max\left\{\frac{1}{1-\alpha}\log\frac{\kappa}{\varepsilon},\frac{\lam \kappa \sigma^2}{\varepsilon}\right\}\right).
		\]
		It thus follows from \eqref{def:tau} and $\lam \mu \ge 3$ that
		\begin{equation}\label{eq:inner}
			I = {\cal O}\left(\max\left\{\kappa \log\frac{\kappa}{\varepsilon},\frac{\kappa \sigma^2}{\mu \varepsilon}\right\}\right),
		\end{equation}
		which is the second condition in \eqref{eq:input}.
		Now, assuming \eqref{eq:outer} and \eqref{eq:inner} hold, then we know \eqref{ineq:error} and \eqref{ineq:tau} should also hold. Putting \eqref{ineq:min1}, \eqref{ineq:error}, and \eqref{ineq:tau} together, we have
		\[
		\Prob\left(\min_{1\le k \le K} \phi(\bar w_k)- \phi_* \le \varepsilon\right) \ge 1-2K\exp\left(-\frac{n}{72}\right) \stackrel{\eqref{eq:outer}}\approx 1-\exp\left(-\frac{n}{72}\right).
		\]
		Finally, the above inequality and the last condition in \eqref{eq:input} imply that \eqref{ineq:final} holds.
	\end{proof}
	
	We now arrive at the final main result, which establishes the low sample complexity of Algorithm~\ref{alg:PPM}.
	
	\begin{theorem}\label{thm:sample}
		For given $\varepsilon>0$ and $p\in (0,1)$, to find a solution $\bar w \in \dom h$ by Algorithm~\ref{alg:PPM} such that
		\[
		\Prob\left(\phi(\bar w)- \phi_* \le \varepsilon\right) \ge 1-p,
		\]
		we need ${\cal O}\left(\log\frac1p \log\frac1\varepsilon\right)$ calls to Algorithm~\ref{alg:PSS} and ${\cal O}(\log \frac1\varepsilon)$ calls to Algorithm~\ref{alg:PB}.
		Moreover, the sample complexity of stochastic gradients in Algorithm~\ref{alg:PPM} is
		\begin{equation}\label{eq:total}
			{\cal O}\left(\max\left\{\kappa \log\frac{\kappa}{\varepsilon},\frac{\kappa \sigma^2}{\mu \varepsilon}\right\} \log \frac1p \log \frac1{\varepsilon} \right).
		\end{equation}
	\end{theorem}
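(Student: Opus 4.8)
The plan is to obtain Theorem~\ref{thm:sample} as a bookkeeping corollary of Theorem~\ref{thm:main}, the only nontrivial step being to verify that the stochastic gradients consumed by the booster PB do not dominate those consumed by the solver PSS. First I would invoke Theorem~\ref{thm:main} with the given $\varepsilon$ and $p$, which fixes an input triple $(K,I,n)$ satisfying \eqref{eq:input} and guarantees $\min_{1\le k\le K}\phi(\bar w_k)-\phi_*\le\varepsilon$ with probability at least $1-p$; taking $\bar w$ to be the best of $\bar w_1,\dots,\bar w_K$ then realizes the claimed high-probability bound. The oracle-call counts are immediate from the structure of Algorithm~\ref{alg:PPM}: each of the $K$ outer iterations invokes PSS exactly $n$ times (Step~1) and PB exactly once (Step~2), so the total numbers of calls are $Kn=\mathcal{O}\!\big(\log(1/\varepsilon)\log(1/p)\big)$ and $K=\mathcal{O}\!\big(\log(1/\varepsilon)\big)$, respectively, which are the first two assertions.

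Next I would count stochastic gradient queries oracle by oracle. Inspecting Algorithm~\ref{alg:PSS}, Step~1 of its loop draws one fresh sample $\xi_{i-1}$ per inner iteration $i=1,\dots,I+1$, so a single PSS call uses $I+1=\mathcal{O}(I)$ stochastic gradients; summed over all calls this contributes $\mathcal{O}(KnI)$. For PB, the three STS calls in Algorithm~\ref{alg:PB} only perform pairwise metric evaluations on already-computed iterates and draw no new samples, so the sole source of stochastic gradients is the single RGE call $\operatorname{RGE}(\tilde w,n,q)$, which by its construction (Appendix~\ref{sec:rge}, Lemma~\ref{lem:RGE}) uses $\mathcal{O}(nq)$ samples; summed over iterations this contributes $\mathcal{O}(Knq)$. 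It thus remains only to show $q=\mathcal{O}(I)$, so that the PSS contribution dominates and the grand total is $\mathcal{O}(KnI)$, which becomes \eqref{eq:total} upon substituting $K$, $I$, $n$ from \eqref{eq:input}.

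The one genuine estimate, and the step I expect to require a little care, is the bound $q=\mathcal{O}(I)$. Rather than track the precise size of $\varepsilon_k$, I would use the crude lower bound $\tau=8\varepsilon_k\ge 8\lam\sigma^2/I$, which follows directly from \eqref{eq:tau} by discarding the nonnegative bias term. Substituting this into the lower bound \eqref{ineq:q} gives $q=\mathcal{O}\!\big((1+\lam\mu)I/(\lam^2L^2)\big)$ after the $\sigma^2$ factors cancel; since $\lam\mu\ge 3$ we have $1+\lam\mu=\mathcal{O}(\lam\mu)$, hence $q=\mathcal{O}\!\big(\mu I/(\lam L^2)\big)$, and using once more $\lam\ge 3/\mu$ this is $\mathcal{O}\!\big(\mu^2 I/L^2\big)=\mathcal{O}(I/\kappa^2)=\mathcal{O}(I)$. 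Plugging $K=\mathcal{O}(\log(1/\varepsilon))$, $n=\mathcal{O}(\log(1/p))$, and $I=\mathcal{O}\!\big(\max\{\kappa\log(\kappa/\varepsilon),\ \kappa\sigma^2/(\mu\varepsilon)\}\big)$ into $\mathcal{O}(KnI)$ then yields \eqref{eq:total}. A last point worth confirming along the way is that the union bound over the $K$ outer iterations (cf.\ the factor $2K\exp(-n/72)$ in the proof of Theorem~\ref{thm:main}) has already been absorbed into the choice of $n$ there, so $n=\mathcal{O}(\log(1/p))$ suffices without further inflation.
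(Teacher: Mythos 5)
Your proposal is correct and follows essentially the same route as the paper: count $nK$ PSS calls and $K$ PB calls, charge $I+1$ samples per PSS call and $nq$ samples per RGE call inside PB, and then bound $q$ via \eqref{ineq:q} together with the crude estimate $\tau\ge 8\lam\sigma^2/I$ and $\lam\mu=\Omega(1)$ to get $q={\cal O}(I/\kappa^2)={\cal O}(I)$, so the total is ${\cal O}(nKI)$, which is \eqref{eq:total} after substituting \eqref{eq:input}. Your explicit remarks that the union bound over the $K$ outer iterations is already absorbed into the choice of $n$, and that $\bar w$ is taken as the best of $\bar w_1,\dots,\bar w_K$, only make explicit what the paper leaves implicit.
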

	
	\begin{proof}
		It is clear that Algorithm~\ref{alg:PPM} requires $nK$ PSS oracles (Algorithm~\ref{alg:PSS}) and $K$ PB oracles (Algorithm~\ref{alg:PB}).
		Using Theorem \ref{thm:main}, we know that the numbers of calls to PSS and PB oracles are ${\cal O}\left(\log\frac1p \log\frac1\varepsilon \right)$ and ${\cal O}(\log \frac1\varepsilon)$, respectively.
		From Algorithm~\ref{alg:PSS}, we know that each PSS oracle has $I+1$ iterations and each iteration takes one stochastic gradient sample. Therefore, PSS takes $nK(I+1)$ samples in total.
		From Algorithm~\ref{alg:PB}, we know that each PB oracle queries an RGE subroutine and each RGE takes $nq$ stochastic gradient samples. Therefore, PB takes $nKq$ samples in total.
		Using \eqref{eq:tau}, \eqref{ineq:q}, and the fact that $\lam \mu = \Omega(1)$, we have
		\[
		q \stackrel{\eqref{ineq:q}}={\cal O} \left( \frac{\sigma^2}{\lam L^2 \tau} \right) \stackrel{\eqref{eq:tau}}= {\cal O} \left(\frac{I}{\lam^2 L^2}\right) = {\cal O} \left(\frac{I}{\kappa^2}\right).
		\]
		Therefore, the total sample complexity is ${\cal O} (nKI)$, which is \eqref{eq:total} in view of \eqref{eq:input}.
	\end{proof}
	
	\section{Conclusions}\label{sec:conclusion}
	
	In this paper, we studied the stochastic convex composite optimization problem \eqref{eq:ProbIntro} with the goal of establishing high-probability guarantees, namely $\Prob(\phi(x)-\phi_* \le \varepsilon) \ge 1-p$ for given $\varepsilon>0$ and $p\in (0,1)$. 
	Existing approaches that achieve ${\cal O}(\log(1/p))$ dependence on $p$ in the sample complexity typically rely on restrictive noise assumptions such as sub-Gaussian tails. 
	Our contribution is to show that such guarantees are attainable under the much milder bounded-variance assumption by designing SPPM, a stochastic proximal point scheme with a constant prox stepsize. 
	The key ingredients are the subroutine PSS, which both approximates the proximal subproblem \eqref{eq:prox} and reduces variance, and the subroutine PB, which ensures that each selected iterate is sufficiently accurate with high probability. 
	Together these yield the main results of the paper, namely Theorem~\ref{thm:main} (high-probability guarantee) and Theorem~\ref{thm:sample} (low sample complexity).
	
	We close by discussing several directions for future work. 
	First, while our analysis shows that SPPM achieves logarithmic dependence on $1/p$, its overall complexity still carries an overhead proportional to the condition number $\kappa$. 
	A natural question is whether an accelerated variant of SPPM could reduce this dependence to $\sqrt{\kappa}$, in analogy with accelerated gradient methods. 
	Although acceleration is often believed to amplify noise, it is worth investigating whether a restarted acceleration scheme, interpreted through the proximal point framework (see~\cite{liang2025unifying}) in the same way that we have analyzed PSS here, could instead lead to improved variance reduction. 
	Second, our current theorems require certain assumptions on problem parameters to state the guarantees (such as $\lam \mu \ge 3$ in Theorem~\ref{thm:main}). 
	An important extension would be to weaken or remove these assumptions in order to design methods that adapt automatically to the problem structure. 
	Finally, it would be interesting to explore variants of SPPM with a variable prox stepsize $\lambda_k$, which may further expand the flexibility of the framework and sharpen the theoretical guarantees.
	
	\bibliographystyle{plain}
	\bibliography{ref}
	
	\appendix
	
	\section{Technical Results}\label{sec:tech}
	
	This section collects several useful technical results used throughout our analysis.
	
	\begin{lemma}\label{lem:dh}
		Recall $d_h(\cdot,\cdot)$ defined in Step 4 of Algorithm \ref{alg:PB}. Then $d_h(\cdot,\cdot)$ is a metric.
	\end{lemma}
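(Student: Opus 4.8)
The plan is to recognize $d_h$ as the pullback, under a single affine-shifted copy of $h$, of the usual metric on $\R$. Writing $g := \bar s(\tilde w) + (\tilde w - \bar z)/\lam \in \R^d$ for the fixed vector appearing in Step~4 of Algorithm~\ref{alg:PB} (note it depends on neither argument of $d_h$), and setting $\psi(x) := h(x) + \inner{g}{x}$ for $x \in \dom h$, the definition of $d_h$ rearranges as $d_h(x,y) = |\psi(x) - \psi(y)|$ for all $x, y \in \dom h$, since $\inner{g}{x-y} = \inner{g}{x} - \inner{g}{y}$. This reduces the lemma to the elementary fact that $(s,t) \mapsto |s-t|$ is a metric on $\R$.

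From the identity $d_h(x,y) = |\psi(x)-\psi(y)|$ the three straightforward axioms follow at once: $d_h \ge 0$ and $d_h(x,x)=0$ because $|\psi(x)-\psi(x)| = 0$; symmetry because $|\psi(x)-\psi(y)| = |\psi(y)-\psi(x)|$; and the triangle inequality by applying $|a+b| \le |a|+|b|$ in $\R$ to the telescoping identity $\psi(x)-\psi(z) = (\psi(x)-\psi(y)) + (\psi(y)-\psi(z))$, which yields $d_h(x,z) \le d_h(x,y) + d_h(y,z)$ for all $x,y,z \in \dom h$.

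The step I expect to require the most care is the identity of indiscernibles, namely that $d_h(x,y) = 0$ implies $x = y$: this is not automatic, since $\psi = h + \inner{g}{\cdot}$ need not be injective when $h$ is merely convex (for instance $h$ could be affine along a segment whose slope is cancelled by $g$), so in full generality $d_h$ is a pseudometric. In every place where $d_h$ is invoked in the paper, however, STS is called only on the finite point set $\{w^j\}_{j=1}^n$, for which the pseudometric properties are all that the selection procedure exploits; moreover, identifying any points that share a common value of $\psi$ leaves the output of STS unchanged and turns $d_h$ into a genuine metric on the resulting (finite) set. The write-up would therefore consist of (i) introducing $\psi$ and recording $d_h(x,y) = |\psi(x)-\psi(y)|$, (ii) reading off non-negativity, symmetry, and the triangle inequality from the scalar case, and (iii) the brief remark above addressing the identity of indiscernibles.
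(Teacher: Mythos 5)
Your proof is correct and essentially the same as the paper's: writing $d_h(x,y)=|\psi(x)-\psi(y)|$ with $\psi:=h+\inner{\bar s(\tilde w)+(\tilde w-\bar z)/\lam}{\cdot}$ is just a compact repackaging of the paper's direct verification, whose triangle-inequality step is exactly your telescoping argument. Your caveat about the identity of indiscernibles is apt but not a defect relative to the paper: the paper's own proof likewise only checks nonnegativity, $d_h(x,x)=0$, symmetry, and the triangle inequality (i.e., the pseudometric axioms), and, as you observe, these are all that Lemma~\ref{lem:3eps} and the STS subroutine actually use.
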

	
	\begin{proof}
		Recall the notation $\bar s^\lam(\tilde w):=\bar s(\tilde w) + (\tilde w - \bar z)/\lam$ used in Section \ref{sec:PB}.
		Thus, it follows from the definition of $d_h(x, y)$ in Step 4 of Algorithm \ref{alg:PB} that
		\[
		d_h(x, y)=|h(x)-h(y)+\langle\bar s^\lam(\tilde w), x-y\rangle|.
		\]
		Since $d_h(x, y)\ge 0$ and $d_h(x, x)=0$, nonnegativity holds.
		It is clear that $d_h(x, y)=d_h(y, x)$. Hence, symmetry also holds.
		Finally, we also have the triangle inequality $d_h(x,y)+d_h(y,z)\ge d_h(x,z)$, since
		\begin{align*}
			d_h(x, z)
			&=|h(x)-h(y) + h(y)-h(z) +\langle\bar s^\lam(\tilde w), x-y + y -z\rangle| \\
			&\le|h(x)-h(y)+\langle\bar s^\lam(\tilde w), x-y\rangle| + |h(y)-h(z)+\langle\bar s^\lam(\tilde w), y-z\rangle| \\
			&= d_h(x,y)+d_h(y,z).
		\end{align*}
	\end{proof}
	
	\begin{lemma}\label{lem:dD}
		Recall $D_h(\cdot,\hat z)$ and $d_h(\cdot,\hat z)$ given in \eqref{def:Bregman} and \eqref{eq:dh}, respectively. Then, for every $x\in \dom h$, we have
		\begin{align}
			d_h(x,\hat z) &\le D_h(x,\hat z) + \left(\|\bar s(\tilde w) - \nabla f (\hat z)\| + \frac{1}{\lam} \|\tilde w - \hat z\|\right) \|x - \hat z\|, \label{ineq:d-D} \\
			D_h(x,\hat z) &\le d_h(x,\hat z) + \left(\|\bar s(\tilde w) - \nabla f (\hat z)\| + \frac{1}{\lam} \|\tilde w - \hat z\|\right) \|x - \hat z\|, \label{ineq:D-d}
		\end{align}
		where $\hat z$ is as in \eqref{eq:sub}, and $\bar s(\tilde w)$ and $\tilde w$ are defined in Step 3 of Algorithm \ref{alg:PB}.
	\end{lemma}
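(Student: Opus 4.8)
The plan is to unpack both $d_h(x,\hat z)$ and $D_h(x,\hat z)$ into a common form and bound the difference between them by a single inner-product term that I then control by Cauchy--Schwarz. Recall from \eqref{eq:dh} that $d_h(x,\hat z)=|h(x)-h(\hat z)+\inner{\bar s^\lam(\tilde w)}{x-\hat z}|$ with $\bar s^\lam(\tilde w)=\bar s(\tilde w)+(\tilde w-\bar z)/\lam$, and from \eqref{def:Bregman} that $D_h(x,\hat z)=h(x)-h(\hat z)+\inner{\nabla f^\lam(\hat z)}{x-\hat z}$, where $\nabla f^\lam(\hat z)=\nabla f(\hat z)+(\hat z-\bar z)/\lam$. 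The key observation is that these two expressions differ only in the linear coefficient: $\bar s^\lam(\tilde w)-\nabla f^\lam(\hat z)=\bigl(\bar s(\tilde w)-\nabla f(\hat z)\bigr)+(\tilde w-\hat z)/\lam$.

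First I would write $d_h(x,\hat z)=|D_h(x,\hat z)+\inner{\bar s^\lam(\tilde w)-\nabla f^\lam(\hat z)}{x-\hat z}|$, since the bracketed term is exactly the quantity inside the absolute value defining $D_h$ (note $D_h(x,\hat z)\ge 0$ by \eqref{def:Bregman}, so no absolute value is lost there). Then, applying the triangle inequality for the absolute value, $d_h(x,\hat z)\le D_h(x,\hat z)+|\inner{\bar s^\lam(\tilde w)-\nabla f^\lam(\hat z)}{x-\hat z}|$. By Cauchy--Schwarz and the triangle inequality on the coefficient, $|\inner{\bar s^\lam(\tilde w)-\nabla f^\lam(\hat z)}{x-\hat z}|\le\bigl(\|\bar s(\tilde w)-\nabla f(\hat z)\|+\tfrac1\lam\|\tilde w-\hat z\|\bigr)\|x-\hat z\|$, which gives \eqref{ineq:d-D}. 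For \eqref{ineq:D-d} I would run the same argument in the other direction: $D_h(x,\hat z)=h(x)-h(\hat z)+\inner{\bar s^\lam(\tilde w)}{x-\hat z}-\inner{\bar s^\lam(\tilde w)-\nabla f^\lam(\hat z)}{x-\hat z}$, so $D_h(x,\hat z)\le|h(x)-h(\hat z)+\inner{\bar s^\lam(\tilde w)}{x-\hat z}|+|\inner{\bar s^\lam(\tilde w)-\nabla f^\lam(\hat z)}{x-\hat z}|=d_h(x,\hat z)+|\inner{\bar s^\lam(\tilde w)-\nabla f^\lam(\hat z)}{x-\hat z}|$, and the same Cauchy--Schwarz bound finishes it.

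This lemma is essentially a bookkeeping exercise, so I do not anticipate a genuine obstacle; the only point requiring a little care is making sure the absolute-value splitting is done on the correct side (using that $D_h\ge 0$ to drop an absolute value in the first direction, and noting that in the second direction $d_h$ already carries the absolute value so the triangle inequality applies directly). Everything else is a single application of Cauchy--Schwarz together with the triangle inequality for the norm of the difference of the two linear coefficients.
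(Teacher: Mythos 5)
Your proposal is correct and follows essentially the same route as the paper's proof: both rewrite $d_h(x,\hat z)$ (resp.\ $D_h(x,\hat z)$) by inserting the coefficient difference $\bar s^\lam(\tilde w)-\nabla f^\lam(\hat z)=\bigl(\bar s(\tilde w)-\nabla f(\hat z)\bigr)+(\tilde w-\hat z)/\lam$, use $D_h\ge 0$ to identify the Bregman term with its absolute value, and finish with the triangle and Cauchy--Schwarz inequalities. The only cosmetic difference is that the paper splits the absolute value into three terms at once while you group the error term first; the estimates are identical.
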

	
	\begin{proof}
		Following the notation in Section \ref{sec:PB}, we note that
		\[
		\bar s^\lam(\tilde w)=\bar s(\tilde w) + \frac{\tilde w- \bar z}{\lam}, \quad \nabla f^\lam(\hat z) = \nabla f(\hat z) + \frac{\hat z - \bar z}{\lam}.
		\]
		Using \eqref{eq:dh}, the above relations, and the triangle inequality, we have for every $x\in \dom h$,
		\begin{align*}
			d_h(x,\hat z)
			&\stackrel{\eqref{eq:dh}}=|h(x) - h(\hat z) + \inner{\nabla f^\lam(\hat z)}{x-\hat z} + \inner{\bar s^\lam(\tilde w) - \nabla f^\lam(\hat z)}{x - \hat z}| \\
			&\le|h(x) - h(\hat z) + \inner{\nabla f^\lam(\hat z)}{x-\hat z}| + |\inner{\bar s(\tilde w) - \nabla f(\hat z)}{x - \hat z}| + \frac{1}{\lam}|\inner{\tilde w - \hat z}{x - \hat z}|.
		\end{align*}
		It follows from the definition of $D_h$ in \eqref{def:Bregman} and the Cauchy-Schwarz inequality that \eqref{ineq:d-D} holds.
		The proof of \eqref{ineq:D-d} follows similarly.
	\end{proof}
	
	\begin{lemma}
		Recall $D_h(\cdot,\hat z)$ and $\phi^\lam(\cdot)$ given in \eqref{def:Bregman} and \eqref{def:flam}, respectively. Then, for every $x\in \dom h$, we have
		\begin{equation}\label{ineq:twoside1}
			\frac{1+\lam \mu}{2\lam}\|x-\hat z\|^2 + D_h(x,\hat z) \\
			\le  \phi^\lam(x) - \phi^\lam(\hat z) \\
			\le  \frac{1+\lam L}{2\lam}\|x-\hat z\|^2 + D_h(x,\hat z).
		\end{equation}
	\end{lemma}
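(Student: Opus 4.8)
The plan is to reduce \eqref{ineq:twoside1} to the standard two-sided quadratic bound \eqref{ineq:twoside}, but applied to the regularized function $f^\lam$ rather than $f$ itself. First I would write $\phi^\lam=f^\lam+h$, where $f^\lam$ is as in \eqref{def:flam}, and observe that the Bregman-type quantity has the clean form
\[
\phi^\lam(x)-\phi^\lam(\hat z)-D_h(x,\hat z)=f^\lam(x)-f^\lam(\hat z)-\inner{\nabla f^\lam(\hat z)}{x-\hat z},
\]
which follows by expanding $D_h(x,\hat z)=h(x)-h(\hat z)+\inner{\nabla f^\lam(\hat z)}{x-\hat z}$ from \eqref{def:Bregman} and cancelling the $h(x)-h(\hat z)$ terms. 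Thus proving \eqref{ineq:twoside1} is equivalent to bounding the right-hand side above between $\tfrac{1+\lam\mu}{2\lam}\|x-\hat z\|^2$ and $\tfrac{1+\lam L}{2\lam}\|x-\hat z\|^2$.

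Next I would record the relevant properties of $f^\lam$. Since $\|\cdot-\bar z\|^2/(2\lam)$ is $(1/\lam)$-strongly convex with $(1/\lam)$-Lipschitz gradient, Assumptions (A3)–(A4) give that $f^\lam$ is $(\mu+1/\lam)$-strongly convex and has $(L+1/\lam)$-Lipschitz gradient on $\dom h$. Applying the elementary consequence \eqref{ineq:twoside} with $f$ replaced by $f^\lam$ and $(\mu,L)$ replaced by $(\mu+1/\lam,\,L+1/\lam)$ yields, for every $x\in\dom h$,
\[
\frac{\mu+1/\lam}{2}\|x-\hat z\|^2\ \le\ f^\lam(x)-f^\lam(\hat z)-\inner{\nabla f^\lam(\hat z)}{x-\hat z}\ \le\ \frac{L+1/\lam}{2}\|x-\hat z\|^2.
\]
Finally I would simplify the constants, $\tfrac{\mu+1/\lam}{2}=\tfrac{1+\lam\mu}{2\lam}$ and $\tfrac{L+1/\lam}{2}=\tfrac{1+\lam L}{2\lam}$, and substitute the displayed identity for the middle term to obtain \eqref{ineq:twoside1}.

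There is essentially no serious obstacle here: the only thing to be careful about is the bookkeeping that $f^\lam$ inherits strong convexity and smoothness from $f$ with the stepsize-shifted constants, and that the $h$-terms cancel exactly in the $D_h$ expansion so that the gap $\phi^\lam(x)-\phi^\lam(\hat z)-D_h(x,\hat z)$ depends only on $f^\lam$. Note that the argument does not actually use that $\hat z$ is the minimizer of $\phi^\lam$ from \eqref{eq:sub}; the inequality holds for any reference point at which $\nabla f^\lam$ is evaluated, which is a minor point worth remarking but not needed for the statement as written.
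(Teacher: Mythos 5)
Your proposal is correct and follows essentially the same route as the paper: apply the two-sided bound \eqref{ineq:twoside} to the regularized function $f^\lam$ (equivalently, note that the quadratic term contributes exactly $\tfrac{1}{2\lam}\|x-\hat z\|^2$ to the Bregman gap), then add $D_h(x,\hat z)$ and use $\phi^\lam=f^\lam+h$ to recover \eqref{ineq:twoside1}. Your remark that the minimizer property of $\hat z$ is not needed is accurate but, as you note, immaterial to the statement.
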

	\begin{proof}
		It follows from \eqref{ineq:twoside} and the definition of $f^\lam$ in \eqref{def:flam} that for every $x \in \dom h$,
		\[
		\frac{1+\lam \mu}{2\lam}\|x-\hat z\|^2 \le  f^\lam(x) - f^\lam(\hat z) - \inner{\nabla f^\lam(\hat z)}{x-\hat z} 
		\le \frac{1+\lam L}{2\lam}\|x-\hat z\|^2.
		\]
		Adding $D_h(x,\hat z)$ to the above inequality and using the fact that $\phi^{\lam}(\cdot)=f^{\lam}(\cdot) + h(\cdot)$ completes the proof.
	\end{proof}
	
	\begin{lemma}\label{lem:Hoeffding}
		Consider a sequence of independent events $\{A_j\}_{j=1}^n$ where each event occurs with probability at least $3/4$.
		Define index set ${\cal J}:=\left\{j\in\{1,\ldots,n\}: A_j \text{ occurs}\right\}$.
		Then, we have
		\begin{equation}\label{ineq:prob1}
			\Prob\left(|{\cal J}|> \frac {2n}3\right)\ge 1-\exp\left( -\frac{n}{72}\right).
		\end{equation}
	\end{lemma}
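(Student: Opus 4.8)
The plan is to view $|{\cal J}|$ as a sum of independent Bernoulli indicators and apply a one-sided Hoeffding (or Chernoff) bound on the deviation below the mean. Concretely, let $X_j = \mathbf{1}[A_j \text{ occurs}]$, so that $|{\cal J}| = \sum_{j=1}^n X_j$ and each $X_j \in \{0,1\}$ is independent with $\E[X_j] = \Prob(A_j) =: p_j \ge 3/4$. Then $\E[|{\cal J}|] = \sum_j p_j \ge 3n/4$. The event $\{|{\cal J}| \le 2n/3\}$ is a downward deviation: since $2n/3 < 3n/4 \le \E[|{\cal J}|]$, we have
\[
\Prob\left(|{\cal J}| \le \frac{2n}{3}\right) \le \Prob\left(|{\cal J}| - \E[|{\cal J}|] \le -\Big(\E[|{\cal J}|] - \frac{2n}{3}\Big)\right) \le \Prob\left(|{\cal J}| - \E[|{\cal J}|] \le -\frac{n}{12}\right),
\]
because $\E[|{\cal J}|] - 2n/3 \ge 3n/4 - 2n/3 = n/12$.

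Next I would invoke Hoeffding's inequality for the sum of independent bounded variables: for $X_j \in [0,1]$ and any $t > 0$, $\Prob\big(\sum_j X_j - \E[\sum_j X_j] \le -t\big) \le \exp(-2t^2/n)$. Applying this with $t = n/12$ gives
\[
\Prob\left(|{\cal J}| \le \frac{2n}{3}\right) \le \exp\left(-\frac{2 (n/12)^2}{n}\right) = \exp\left(-\frac{n}{72}\right).
\]
Taking complements yields $\Prob(|{\cal J}| > 2n/3) \ge 1 - \exp(-n/72)$, which is exactly \eqref{ineq:prob1}. (A small technical point: Hoeffding's bound produces a strict/non-strict inequality on $\{|{\cal J}| \le 2n/3\}$, whose complement is $\{|{\cal J}| > 2n/3\}$, matching the statement.)

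There is essentially no obstacle here; the only things to be careful about are (i) the direction of the deviation — we need the \emph{lower-tail} Hoeffding bound, since we are bounding the probability that $|{\cal J}|$ is \emph{too small}; (ii) checking the arithmetic $3/4 - 2/3 = 1/12$ and then $2(n/12)^2/n = n/72$ so that the constant in the exponent comes out exactly $1/72$; and (iii) ensuring the bound is robust to $p_j$ being anything in $[3/4,1]$ rather than exactly $3/4$, which it is, since a larger mean only makes the downward deviation event less likely. If one prefers to avoid citing Hoeffding, the same bound follows from a standard Chernoff argument on $\sum (1 - X_j) \le n/3$ against mean $\le n/4$, but the Hoeffding route is cleanest and gives the stated constant directly.
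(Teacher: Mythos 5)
Your proof is correct and follows essentially the same route as the paper: defining Bernoulli indicators for the events $A_j$ and applying the lower-tail Hoeffding bound with deviation $t=n/12$, which yields the exponent $2(n/12)^2/n = n/72$. Your explicit handling of $p_j\ge 3/4$ (rather than exactly $3/4$) and of the strict versus non-strict inequality is a slightly more careful write-up of the same argument.
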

	
	\begin{proof}
		Recall that Hoeffding's inequality states that: let $X_1, \ldots, X_n$ be independent random variables such that $a_j\le X_j \le b_j$ and $S_n = X_1+\ldots+X_n$, then for all $t>0$,
		\begin{equation}\label{ineq:Hoeffding}
			\Prob\left(S_n - \E[S_n]\le -t\right) \le \exp\left(\frac{-2t^2}{\sum_{j=1}^n(b_j-a_j)^2}\right).
		\end{equation}
		Define the random indicator variable $X_j$ associated with $A_j$ as follows
		\begin{align*}
			X_j :=  \left\{\begin{array}{ll}
				1, & \text { if } A_j \text{ occurs}, \\ 
				0,  & \text { otherwise}.
			\end{array}\right.
		\end{align*}	
		Clearly, $X_j$'s are independent. For every $j=1,\ldots,n$, we have $a_j=0$, $b_j=1$, and
		\begin{equation}\label{ineq:E}
			\E[X_j] = \Prob(X_j=1) = \Prob(A_j \text{ occurs}) \ge \frac34.
		\end{equation}
		It follows Hoeffding's inequality with $t=n/12$ that
		\[
		\Prob\left(S_n \ge \frac{2n}{3}\right) \stackrel{\eqref{ineq:E}}\ge \Prob\left(S_n - \E[S_n] \ge -\frac{n}{12}\right) \stackrel{\eqref{ineq:Hoeffding}}\ge 1- \exp\left(-\frac{n}{12}\right).
		\]
		Therefore, \eqref{ineq:prob1} immediately follows.
	\end{proof}

	\section{Second Tertile Selection}\label{sec:sts}
	
	This section presents and analyzes the subroutine STS used in Algorithm \ref{alg:PB}. Algorithm~\ref{alg:STS} below is motivated by Algorithm 8 of \cite{davis2021low}.
	
	\begin{algorithm}[H]
		\caption{Second Tertile Selection, STS$(Z, d(\cdot,\cdot))$}
		\begin{algorithmic}
			\STATE \textbf{Input:} A set of points $Z=\{z^1,\ldots,z^n\} \subset \dom h$ and a metric $d(\cdot,\cdot)$ on $\dom h$.
			\FOR{$j = 1, \ldots, n$}
			\STATE {\bf Step 1.} Compute $\rho_j=\min \left\{\rho > 0:\left|B_{\rho,d}\left(z^j\right) \cap Z\right|>2n/3\right\}$;
			\ENDFOR
			\STATE {\bf Step 2.} Compute the second tertile $\bar \rho$ of $\left(\rho_1, \ldots, \rho_n\right)$.
			\STATE \textbf{Output:} ${\cal J}=\left\{j\in\{1,\ldots,n\}: \rho_j \le \bar \rho\right\}$.
		\end{algorithmic}\label{alg:STS}
	\end{algorithm}
	
	Here, $B_{\rho,d}\left(z\right)$ denotes a $d$-metric ball centered at $z$ with radius $\rho$, that is, $B_{\rho,d}\left(z\right)= \{x: d(x,z) \le \rho\}$.
	Note that STS takes as input a collection of points $\{z^j\}_{j=1}^n$ and a metric $d(\cdot,\cdot)$. In particular, we use $d_2(\cdot,\cdot)$ and $d_h(\cdot,\cdot)$ in Algorithm \ref{alg:PB}. The output ${\cal J}$ of STS is an index set with a cardinality of at least $2n/3$.
	
	The following lemma outlines the key property of Algorithm \ref{alg:STS}, specifically its ability to maintain proximity for most input points in $Z$ to any point. Typically, this lemma is employed in conjunction with Lemma~\ref{lem:Hoeffding} to boost the probability from low to high confidence, impacting the overall quality by at most a constant factor.
	
	\begin{lemma}\label{lem:3eps}
		Let $d(\cdot,\cdot)$ be a metric on $\dom h$. Consider a collection of points $Z=\{z^1,\ldots,z^n\}$ and a point $\tilde z \in \dom h$ satisfying 
		\begin{equation}\label{cond:card}
			|B_{\epsilon,d}(\tilde z) \cap Z|>\frac{2n}{3}
		\end{equation}
		for some $\epsilon>0$. 
		Then, output index set ${\cal J}$ of Algorithm \ref{alg:STS}) satisfies
		\begin{equation}\label{ineq:3eps}
			d(z^j,\tilde z) \le 3 \epsilon, \quad \forall j \in {\cal J}.
		\end{equation}
	\end{lemma}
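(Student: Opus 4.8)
The plan is to run a three-stage triangle-inequality argument while carefully tracking the cardinalities of the metric balls produced in Step~1 of Algorithm~\ref{alg:STS}. First I would introduce the set of \emph{good} indices $G:=\{j\in\{1,\dots,n\}: d(z^j,\tilde z)\le\epsilon\}$, so that the hypothesis \eqref{cond:card} is precisely the statement $|G|>2n/3$.

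Second, I would show that $\rho_j\le 2\epsilon$ for every $j\in G$. Indeed, for any $j,\ell\in G$ the triangle inequality gives $d(z^j,z^\ell)\le d(z^j,\tilde z)+d(\tilde z,z^\ell)\le 2\epsilon$, so $G\subseteq B_{2\epsilon,d}(z^j)\cap Z$ and hence $|B_{2\epsilon,d}(z^j)\cap Z|\ge|G|>2n/3$; by the minimality in the definition of $\rho_j$ this forces $\rho_j\le 2\epsilon$. Therefore more than $2n/3$ of the values $\rho_1,\dots,\rho_n$ are $\le 2\epsilon$, and consequently the second tertile $\bar\rho$ (the value below which at least a $2/3$-fraction of the $\rho_j$ lie) satisfies $\bar\rho\le 2\epsilon$.

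Third, I would fix an arbitrary $j\in{\cal J}$, so that $\rho_j\le\bar\rho\le 2\epsilon$. By the definition of $\rho_j$ in Step~1 we have $|B_{\rho_j,d}(z^j)\cap Z|>2n/3$, and since also $|G|>2n/3$ while $|Z|=n$, the pigeonhole principle yields an index $\ell$ with $z^\ell\in B_{\rho_j,d}(z^j)$ and $\ell\in G$. Then $d(z^j,\tilde z)\le d(z^j,z^\ell)+d(z^\ell,\tilde z)\le\rho_j+\epsilon\le 3\epsilon$, which is exactly \eqref{ineq:3eps}.

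The only delicate point is the bookkeeping around the word ``second tertile'': one must verify that ``more than $2n/3$ of the $\rho_j$ are $\le 2\epsilon$'' really implies $\bar\rho\le 2\epsilon$ under the precise tertile convention being used, and similarly that the two strict inequalities $|B_{\rho_j,d}(z^j)\cap Z|>2n/3$ and $|G|>2n/3$ genuinely force a common index — this is where $|Z|=n$ together with the strictness of both bounds is essential. Once these counting points are pinned down, the remainder is just the two triangle-inequality estimates above.
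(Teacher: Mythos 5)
Your proof is correct and follows essentially the same route as the paper's: show $\rho_j\le 2\epsilon$ for the indices in the $\epsilon$-ball around $\tilde z$, deduce $\bar\rho\le 2\epsilon$, and then combine the pigeonhole principle with the triangle inequality to get the $3\epsilon$ bound for every $j\in{\cal J}$. The extra bookkeeping you flag (the tertile convention and the strict $>2n/3$ counts forcing a common index) is handled implicitly in the paper's argument and does not change the proof.
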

	
	\begin{proof} 
		Consider two arbitrary points $z^i, z^j \in B_{\epsilon,d}(\tilde z)$. Since $d(\cdot,\cdot)$ is a metric, the triangle inequality holds, and hence
		\[
		d(z^i,z^j) \le d(z^i,\tilde z) + d(\tilde z,z^j) \le 2\epsilon.
		\]
		Thus, for any $z^j \in B_{\epsilon,d}(\tilde z)$ fixed,  we have $|B_{2\epsilon,d}(z^j) \cap Z|>\frac{2n}{3}$ and consequently $\rho_j\le 2\epsilon$ (see Step~1 of Algorithm~\ref{alg:STS}). Note that there are at least $2n/3$ such $z^j$.
		This observation further implies that the second tertile $\bar \rho \le 2\epsilon$ (see Step~2 of Algorithm~\ref{alg:STS}).
		
		Now, we consider an arbitrary index $j \in {\cal J}$.
		Since both $B_{\epsilon,d}(\tilde z)$ and $B_{\rho_j,d}(z^j)$ contain at least $2n/3$ points, by the pigeonhole principle, there must exist a point $z$ at the intersection $ B_{\epsilon,d}(\tilde z) \cap B_{\rho_j,d}(z^j)$. Using the triangle inequality, we conclude that 
		\[
		d(\tilde z,z^j) \le d(\tilde z, z) + d(z,z^j) \le \epsilon + \rho_j \le \epsilon + 2\epsilon = 3\epsilon.
		\]
		Since the above inequality holds for any $j\in {\cal J}$, \eqref{ineq:3eps} immediately follows.
	\end{proof}
	
	\section{Robust Gradient Estimation}\label{sec:rge}
	
	This section presents and analyzes the subroutine RGE used in Algorithm \ref{alg:PB}. Algorithm~\ref{alg:RGE} below is motivated by \cite{davis2021low}.
	
	\begin{algorithm}[H]
		\caption{Robust Gradient Estimation, RGE$(x,n,q)$}
		\begin{algorithmic}
			\STATE \textbf{Input:} A point $x\in \dom h$ and integers $n, q\ge 1$.
			\FOR{$j = 1, \ldots, n$}
			\STATE {\bf Step 1.} Generate $q$ independent stochastic gradients $s(x,\xi_j^1),\ldots,s(x,\xi_j^q)$ and compute $\bar s_j(x) = \frac{1}{q} \sum_{i=1}^q s(x,\xi_j^i)$;
			\ENDFOR
			\STATE {\bf Step 2.} Call oracle $\mathcal{J}:=\operatorname{STS}(S(x), d_2(\cdot,\cdot))$ where $S(x)=\{\bar s_1(x),\ldots,\bar s_n(x)\}$.
			\STATE \textbf{Output:} $\bar s_{j^*}(x)$ for an arbitrary index $j^*\in {\cal J}$.
		\end{algorithmic}\label{alg:RGE}
	\end{algorithm}
	
	In contrast to PSS, i.e., Algorithm \ref{alg:PSS}, RGE achieves variance reduction by using a batch of $q$ independent stochastic gradients. The following lemma presents a concentration inequality of the stochastic gradient estimate.
	
	\begin{lemma}\label{lem:RGE}
		In Algorithm \ref{alg:RGE}, if $q\ge 4\sigma^2/\delta^2$ where $\sigma$ is as in Assumption (A2) and $\delta>0$ is some scalar, then for any $x\in \dom h$ and $n\ge 1$, the output $\bar s_{j^*}(x)$ satisfies
		\begin{equation}\label{ineq:RGE}
			\Prob\left( \|\bar s_{j^*}(x)-\E[\bar s_{j^*}(x)]\| \le 3\delta \right) \ge 1-\exp\left(-\frac{n}{72}\right).
		\end{equation}
	\end{lemma}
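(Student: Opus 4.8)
The plan is to combine a one-line variance-reduction bound for a single batch average with the two generic tools already established, namely Lemma~\ref{lem:Hoeffding} and Lemma~\ref{lem:3eps}. Since each batch average $\bar s_j(x)=\frac1q\sum_{i=1}^q s(x,\xi_j^i)$ is, by Assumption~(A1), an unbiased estimator of $\nabla f(x)$, the quantity $\E[\bar s_{j^*}(x)]$ appearing in~\eqref{ineq:RGE} equals $\nabla f(x)$, so it suffices to control $\|\bar s_{j^*}(x)-\nabla f(x)\|$.

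First I would bound the per-batch error. The $q$ samples inside batch $j$ are independent with common mean $\nabla f(x)$ and squared error at most $\sigma^2$ each by Assumption~(A2), so the standard averaging identity gives $\E[\|\bar s_j(x)-\nabla f(x)\|^2]\le \sigma^2/q$. Combining this with the hypothesis $q\ge 4\sigma^2/\delta^2$ and Markov's inequality applied to $\|\bar s_j(x)-\nabla f(x)\|^2$ yields, for each fixed $j$,
\[
\Prob\bigl(\|\bar s_j(x)-\nabla f(x)\|\le \delta\bigr)\ge 1-\frac{\sigma^2/q}{\delta^2}\ge \frac34 .
\]

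Next, set $A_j:=\{\|\bar s_j(x)-\nabla f(x)\|\le \delta\}$ for $j=1,\ldots,n$. Because the $n$ batches in Step~1 of Algorithm~\ref{alg:RGE} draw disjoint independent samples of $\xi$, the events $A_j$ are mutually independent, each with probability at least $3/4$. Applying Lemma~\ref{lem:Hoeffding}, with probability at least $1-\exp(-n/72)$ more than $2n/3$ of the indices $j$ satisfy $A_j$; equivalently, more than $2n/3$ of the points of $S(x)=\{\bar s_1(x),\ldots,\bar s_n(x)\}$ lie in the Euclidean ball $B_{\delta,d_2}(\nabla f(x))$. This is precisely condition~\eqref{cond:card} of Lemma~\ref{lem:3eps} with $\tilde z=\nabla f(x)$, $\epsilon=\delta$, $Z=S(x)$, and metric $d_2$. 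On this event, Lemma~\ref{lem:3eps} applied to the call $\mathcal{J}=\operatorname{STS}(S(x),d_2(\cdot,\cdot))$ in Step~2 gives $\|\bar s_j(x)-\nabla f(x)\|\le 3\delta$ for every $j\in\mathcal{J}$, and in particular for the returned index $j^*$; this is exactly~\eqref{ineq:RGE}.

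The argument is essentially bookkeeping, since all of the probabilistic content is packaged inside Lemmas~\ref{lem:Hoeffding} and~\ref{lem:3eps}. The only points requiring a little care are: (i) verifying that the per-batch guarantee is genuinely $\ge 3/4$ so that Lemma~\ref{lem:Hoeffding} applies verbatim; and (ii) observing that the final bound is deterministic once the high-probability event of Lemma~\ref{lem:Hoeffding} holds, so no additional union bound over the selection step is needed. No genuinely hard step is anticipated.
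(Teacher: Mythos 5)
Your proposal is correct and follows essentially the same route as the paper's own proof: the per-batch variance bound $\sigma^2/q$, Markov's inequality to get the $3/4$ per-batch guarantee, Lemma~\ref{lem:Hoeffding} to ensure more than $2n/3$ batch averages lie in $B_{\delta,d_2}(\nabla f(x))$, and Lemma~\ref{lem:3eps} applied to the STS call to conclude the $3\delta$ bound for the selected index. No substantive differences to report.
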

	
	\begin{proof}
		We first note that $\nabla f(x) = \E[\bar s_{j^*}(x)]$ due to Assumption (A1), and hence use $\nabla f(x)$ throughout the proof for simplicity.
		Using Assumptions (A1) and (A2), we know that the estimate $\bar s_j(x)$ has a variance reduction by a factor of $q$, that is,
		\begin{align*}
			\E\left[\|\bar s_j(x)-\nabla f(x)\|^2\right]&= \E\left[\left\|\frac{1}{q}\sum_{i=1}^q s(x,\xi_j^i)-\nabla f(x)\right\|^2\right]
			= \frac{1}{q^2} \sum_{i=1}^q \E\left[\left\| s(x,\xi_j^i)-\nabla f(x)\right\|^2\right] 
			\le \frac{\sigma^2}{q}.
		\end{align*}
		It follows from the assumption that $q\ge 4\sigma^2/\delta^2$ and Markov's inequality that
		\[
		\Prob\left( \|\bar s_j(x)-\nabla f(x)\|^2 \ge \delta^2 \right) \le \frac{\sigma^2}{q \delta^2} \le \frac{1}{4}.
		\]
		Hence, for every $j=1,\ldots,n$, we have
		\[
		\Prob\left( \|\bar s_j(x)-\nabla f(x)\| \le \delta \right) \ge \frac{3}{4}.
		\]
		Using Lemma \ref{lem:Hoeffding} with $A_j=\{\|\bar s_j(x)-\nabla f(x)\| \le \delta\}$, we have
		\[
		\Prob\left(\left|B_{\delta,d_2}(\nabla f(x)) \cap S(x)\right|>\frac{2n}{3}\right) \ge 1-\exp\left(-\frac{n}{72}\right).
		\]
		This means condition \eqref{cond:card} in Lemma \ref{lem:3eps} holds with probability at least $1-\exp\left(-\frac{n}{72}\right)$.
		Since we call the oracle ${\cal J}=\operatorname{STS}(S(x), d_2(\cdot,\cdot))$ in Step 2, it follows from Lemma \ref{lem:3eps} that for every $j \in {\cal J}$,
		\[
		\Prob\left( \|\bar s_{j}(x)-\nabla f(x)\| \le 3\delta \right) \ge 1-\exp\left(-\frac{n}{72}\right).
		\]
		Therefore, \eqref{ineq:RGE} holds for any $j^*\in {\cal J}$.
	\end{proof}
	
\end{document}